\DeclareMathOperator{\cat}{CAT}
\newtheorem{thm}{Theorem}[section]
\newtheorem{lem}[thm]{Lemma}
\newtheorem{corollary}[thm]{Corollary}
\newtheorem*{thm*}{Theorem}
\newenvironment{customthm}[1]
{\innercustomthm}
{\endinnercustomthm}
\newenvironment{customcor}[1]
{\innercustomthm}
{\endinnercustomthm}
\newtheorem{prop}[thm]{Proposition}
\theoremstyle{definition}
\newtheorem{defn}[thm]{Definition}
\newtheorem{conjecture}[thm]{Conjecture}
\newtheorem{remark}[thm]{Remark}
\newtheorem{question}[thm]{Question}
\newtheorem*{Outline}{Outline}
\newtheorem*{ack}{Acknowledgements}
\newcommand{\bnd}{\partial}
\author{Michael Ben-Zvi and Robert Kropholler}
\title{Right-angled Artin Group Boundaries}
\newcommand{\Z}{\mathbb{Z}}
\newcommand{\R}{\mathbb{R}}
\newcommand{\B}{\mathcal{B}}
\newcommand{\T}{\mathcal{T}}
\newcommand{\W}{\mathcal{W}}
\DeclareMathOperator{\Itin}{Itin}
\DeclareMathOperator{\nexus}{Nex}
\DeclareMathOperator{\nerve}{Nerve}
\begin{document}
			\begin{abstract}
			In all known examples of a CAT(0) group acting on CAT(0) spaces with non-homeomorphic CAT(0) visual boundaries, the boundaries are each not path connected. In this paper, we show this does not have to be the case by providing examples of right-angled Artin groups which exhibit non-unique CAT(0) boundaries where all of the boundaries are arbitrarily connected. We also prove a combination theorem for certain amalgams of CAT(0) groups to act on spaces with non-path connected visual boundaries. We apply this theorem to some right-angled Artin groups.
		\end{abstract}
	\maketitle

	\section{Introduction}
	
	When Gromov introduced hyperbolic groups \cite{Gromov87}, he showed that their boundaries are well-defined in the sense that if $\Gamma$ is hyperbolic and acts geometrically on spaces $X$ and $Y$, then $\bnd X$ is homeomorphic to  $\bnd Y$. He asked if the same is true for $\cat(0)$ groups. Croke and Kleiner answered this question in the negative using the fundamental group of the surface amalgam in Figure \ref{fig:torus complex} \cite{CK00}. Throughout we denote this group $CK$. Changing the angle of intersection between the curves $b$ and $c$ from $\pi/2$ to anything else changes the universal cover in a way which makes the resulting boundaries non-homeomorphic. Later, Wilson showed that for any pair of angles, the corresponding universal covers have non-homoemorphic boundary \cite{JWilson}. Thus $CK$ admits uncountably many visual boundaries. Croke and Kleiner extended their results to a larger class of groups \cite{CK02} and Mooney further generalized this work, providing even more examples of $\cat(0)$ groups with non-unique boundary \cite{Moo10}.

	\begin{figure}[h]
		\includegraphics[width=0.5\textwidth]{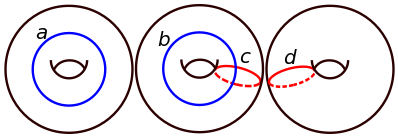}
		\centering
		\caption{Three tori with the curve $a$ identified with $b$ and the curve $c$ identified with $d$.}
		\label{fig:torus complex}
	\end{figure}

	The boundaries for every one of these groups has one thing in common: it is not path connected. This leads to the following questions
	\begin{question}
		Suppose $G$ acts geometrically on a $\cat(0)$ spaces $X$ and $Y$. 
		\begin{itemize}
			\item 	If $\bnd X$ is path connected, are $\bnd X$ and $\bnd Y$ homeomorphic? 
			\item If $\bnd X$ is $n$-connected, are $\bnd X$ and $\bnd Y$ homeomorphic? 
		\end{itemize}
	\end{question}
	In this paper, we show the answer to these questions is `no.'
	\begin{customthm}{\ref{thm: nonhomeo}}
		For each $n$, there is a group $G_n$ and $\cat(0)$ spaces $X_n$ and $Y_n$ admitting geometric group actions by $G_n$ with the following properties:
		\begin{itemize}
			\item $\bnd X_n$ and $\bnd Y_n$ are $n$-connected
			\item $\bnd X_n$ and $\bnd Y_n$ are not homeomorphic
		\end{itemize} 
	\end{customthm} We do this by considering $G_n = CK\times \Z^{n+1}$. Once again the choice of angle between the curves $b$ and $c$ produces spaces with non-homeomorphic boundaries. One should note that due to the central $\Z^{n+1}$ the boundary of any CAT(0) space upon which $G_n$ acts geometrically has boundary $S^{n}\ast Z$, for some space $Z$, and so is $n$-connected. We ask the following question. 
	
	\begin{question}
		Does there exist group $G$ which acts on two CAT(0) spaces $X, Y$ geometrically and $\bnd X$ is $n$-connected but $\bnd Y$ is not?
	\end{question}
	
	Another point of interest arising from studying $CK$ is trying to understand when the phenomenon of the boundary not being path connected occurs. For hyperbolic groups path connectivity is equivalent to one-ended \cite{BestvinaMess,BowCutPoints,Swarup}. This is also true for groups which are CAT(0) and hyperbolic relative to free Abelian subgroups \cite{Benzvi}. In this paper we give a combination theorem which gives conditions on a space $X$ together with a geometric action of $A\ast_C B$ to have path disconnected boundary. Namely, we prove the following:
	
	\begin{customthm}{\ref{thm:notpathconn}}
		Let $G = A\ast_C B$ be a CAT(0) group acting geometrically on a CAT(0) space $X$. Suppose that there is a subspace $X_C$ with a geometric action of $C$ which separates $X$. Suppose further that $A$ acts geometrically on a subspace $X_A$ satisfying the following:
		\begin{enumerate}
			\item $X_A$ has a connected block decomposition,
			\item $\bnd X_A$ is not path connected, and
			\item $\Lambda(C)\subset\nexus(X_A)$
		\end{enumerate}
		then $\bnd X$ is not path connected.
	\end{customthm}
	
	See Section \ref{section block} for relevant definitions.
	
	The last requirement is needed in light of the example $F_2\ast_\Z F_2$ giving a surface group. Thus it is not enough to assume that $\bnd X_A$ is not path connected. 
	
	Throughout the paper the example to have in mind is when $A = CK$ and $C = \langle a, d\rangle$. In this setting, $CK$ acts naturally on its Salvetti complex and this space does not have a path connected visual boundary. If we then amalgamate over the visual $F_2$ subgroup, then the resulting group acts on a space with a non-path connected visual boundary.
	
	The major application of this theorem comes in looking at right-angled Artin groups (RAAGs). Let $A_\Gamma$ be the RAAG with defining graph $\Gamma$. Then we have sufficient conditions showing when $S_\Gamma$, the universal cover of the Salvetti complex of $A_\Gamma$, has a non-path connected visual boundary. Let $P_4$ be the defining graph for $CK$ with vertex set $V(P_4)=\{a,b,c,d\}$ in the usual ordering.
	\begin{customcor}{\ref{cor:raagscor}}
		Let $A_{\Gamma}$ be a RAAG admitting a graph of groups as in Figure \ref{fig:graph} where $H_i$ is a proper parabolic subgroup of $CK$. Then $\bnd S_{\Gamma}$ is not path connected. 
	\end{customcor}
	
	One class of RAAGs which fits into the hypotheses of this corollary are those whose defining graph is an $n$-cycle for $n\geq 5$, which was the original motivation for proving Theorem \ref{thm:notpathconn}. This fits into the following conjecture of Mihalik:

	\begin{conjecture}
		$\bnd S_\Gamma$ is path connected if and only if $\Gamma$ is a join.
	\end{conjecture}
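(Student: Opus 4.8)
The plan is to prove the two implications separately, since they are of completely different difficulty.

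The direction ``$\Gamma$ a join $\Rightarrow \bnd S_\Gamma$ path connected'' is the routine one, and I would record it as a short lemma. If $\Gamma = \Gamma_1 * \Gamma_2$ is a nontrivial join with both $\Gamma_i$ nonempty, then $A_\Gamma \cong A_{\Gamma_1} \times A_{\Gamma_2}$, and since the Salvetti complex of a product splits as a product of Salvetti complexes, metrically $S_\Gamma \cong S_{\Gamma_1} \times S_{\Gamma_2}$ with the product $\cat(0)$ metric. The visual boundary of a product of $\cat(0)$ spaces is the spherical join of the factor boundaries, so $\bnd S_\Gamma \cong \bnd S_{\Gamma_1} * \bnd S_{\Gamma_2}$. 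Each $\Gamma_i$ is nonempty, hence each $A_{\Gamma_i}$ is infinite and each $\bnd S_{\Gamma_i}$ is nonempty; the join of two nonempty spaces is always path connected (any point lies on a join segment running to a fixed basepoint of either factor), so $\bnd S_\Gamma$ is path connected.

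The direction ``$\Gamma$ not a join $\Rightarrow \bnd S_\Gamma$ not path connected'' is the crux, and is exactly where Theorem \ref{thm:notpathconn} and Corollary \ref{cor:raagscor} are meant to feed in. The first step is a purely graph-theoretic dichotomy: if a finite graph $\Gamma$ is not a join, then it is either disconnected, or connected with an induced path $P_4$ (the one-vertex case, with $\bnd S_\Gamma = S^0$, being immediate). This follows from the classical theorem that the $P_4$-free graphs are exactly the cographs, together with the observation that a connected cograph on at least two vertices is itself a join; so a connected non-join graph cannot be a cograph and must contain an induced $P_4$. I would first dispose of the disconnected case, where $A_\Gamma$ is a nontrivial free product of infinite RAAGs, so $A_\Gamma$ is infinitely ended and $\bnd S_\Gamma$ is disconnected (via the Bass--Serre tree), a fortiori not path connected. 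This leaves $\Gamma$ connected and not a join, where $\Gamma$ now contains an induced $P_4$ and hence a parabolic subgroup isomorphic to $CK = A_{P_4}$.

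The second step is to run an induction on $|V(\Gamma)|$ that converts this $P_4$ into an application of the combination theorem. A connected non-join $\Gamma$ possesses a separating full subgraph $\Gamma_0$ giving a visual amalgam $A_\Gamma = A_{\Gamma_1} *_{A_{\Gamma_0}} A_{\Gamma_2}$, and I would try to choose it so that the induced $P_4$ lies inside $\Gamma_1$. Setting $A = A_{\Gamma_1}$, $C = A_{\Gamma_0}$, $X = S_\Gamma$, $X_A = S_{\Gamma_1}$, and $X_C = S_{\Gamma_0}$, one must then verify the three hypotheses of Theorem \ref{thm:notpathconn}: that $S_{\Gamma_0}$ separates $S_\Gamma$, that $S_{\Gamma_1}$ has a connected block decomposition, and that $\Lambda(C) \subset \nexus(S_{\Gamma_1})$. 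Non-path-connectivity of $\bnd S_{\Gamma_1}$ would come either from the inductive hypothesis (if $\Gamma_1$ is again a connected non-join) or, at the base of the induction, from the $CK$/$P_4$ situation packaged in Corollary \ref{cor:raagscor} via the graph-of-groups decomposition of Figure \ref{fig:graph}.

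The main obstacle is precisely the bookkeeping in this last step. For an arbitrary connected non-join graph there need not be a single separating subgraph that simultaneously isolates a $P_4$ on one side, keeps that side a non-join (so the induction does not collapse onto a join piece), and realizes the nexus condition $\Lambda(C) \subset \nexus(X_A)$ --- the very hypothesis flagged after Theorem \ref{thm:notpathconn} as indispensable in light of $F_2 *_\Z F_2$. Controlling the block decomposition and maintaining the nexus inclusion across an arbitrary chain of visual splittings is the genuinely hard part, and is presumably why the statement appears here as Mihalik's conjecture rather than a theorem. A complete proof would, I expect, require either a canonical (JSJ-type) splitting of $A_\Gamma$ adapted to the nexus condition, or a direct construction, from the combinatorics of the connected complement $\bar\Gamma$, of a pair of separated families of geodesic rays in $\bnd S_\Gamma$ that no path can join.
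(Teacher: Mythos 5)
The statement you were asked to prove is stated in the paper as an open conjecture (attributed to Mihalik), and the paper contains no proof of it; it only establishes special cases (paths with at least $4$ vertices, cycles with at least $5$, and the graph-of-groups situation of Corollary \ref{cor:raagscor}), and explicitly exhibits in Figure \ref{fig:notappl} two connected non-join graphs for which path connectedness of $\bnd S_\Gamma$ is unknown. So your proposal cannot be measured against a proof in the paper; it can only be judged on its own terms. On those terms, your easy direction is correct and complete: a nontrivial join gives $S_\Gamma \cong S_{\Gamma_1}\times S_{\Gamma_2}$, the boundary of a product is the spherical join $\bnd S_{\Gamma_1} \ast \bnd S_{\Gamma_2}$, and a join of nonempty spaces is path connected. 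Your reduction of the converse is also sound as far as it goes: the disconnected case via ends, and the cograph dichotomy showing a connected non-join on at least two vertices contains an induced $P_4$, hence a parabolic copy of $CK$.

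The gap is exactly where you say it is, and it is worth making it more concrete than ``bookkeeping.'' Theorem \ref{thm:notpathconn} needs three things simultaneously: translates of $X_C$ separating $X_A$, a connected block decomposition of $X_A$, and $\Lambda(C)\subset\nexus(X_A)$. The paper verifies the nexus condition only when $C$ is a \emph{proper parabolic subgroup of $CK$ itself} (via finite itineraries, or Lemmas \ref{lem:parabolicsarelonely}--\ref{lem:containedinthenexus}). For an arbitrary connected non-join $\Gamma$, the vertices of the induced $P_4$ may have many neighbors outside it, so no visual splitting $A_\Gamma = A_{\Gamma_1}\ast_{A_{\Gamma_0}} A_{\Gamma_2}$ need exist in which the separating parabolic $A_{\Gamma_0}$ lands inside the $CK$ (or inside whatever $X_A$ carries the block decomposition) --- indeed the second graph of Figure \ref{fig:notappl} contains an induced $P_4$ (e.g.\ on the two middle path vertices together with the two vertices joined through the degree-two vertex) yet admits no splitting to which the theorem applies, which is precisely why the paper leaves it open. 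The cautionary example $F_2\ast_\Z F_2$ flagged after Theorem \ref{thm:notpathconn} shows the nexus hypothesis cannot simply be dropped from your induction. Your closing assessment --- that completing the argument would need either a canonical splitting adapted to the nexus condition or a direct construction of separated rays --- is an accurate description of what is missing, but it means your proposal is a correct partial result plus an honestly acknowledged open problem, not a proof.
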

	
	We can also study the groups from \cite{Moo10}. These groups are of the form $(G_-\times \Z^n)\ast_{\Z^n}(\Z^n\times \Z^m)\ast_{\Z^m}(\Z^m\times G_+)$ where $G_-, G_+$ are infinite CAT(0) groups. As discussed above these groups are shown to have non-unique CAT(0) boundary \cite{Moo10}. For certain of these groups we can show that the boundary is not path connected. 
	
	\begin{customthm}{\ref{thm:mooney}}
		Let $G$ be of the form $(G_-\times \Z)\ast_{\Z}(\Z\times \Z)\ast_{\Z}(\Z\times G_+)$ or $(G_-\times \Z)\ast_{\Z}(\Z\times \Z^2)\ast_{\Z^2}(\Z^2\times G_+)$, where $G_-$ and $G_+$ are CAT(0) groups. Then $G$ acts on a CAT(0) $X$ and $\bnd X$ is not path connected. 
	\end{customthm}
	
	We conjecture that this result holds more generally. 
	
	\begin{conjecture}
		Let $G = (G_-\times \Z^n)\ast_{\Z^n}(\Z^n\times \Z^m)\ast_{\Z^m}(\Z^m\times G_+)$ where $G_-, G_+$ are infinite CAT(0) groups. Then $G$ act on a CAT(0) space $X$ such that $\bnd X$ is not path connected. 
	\end{conjecture}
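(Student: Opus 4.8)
The plan is to build a CAT(0) space for $G$ by gluing, to reduce the general case to a right-angled Artin base case via two applications of the combination theorem (Theorem \ref{thm:notpathconn}), and to isolate the higher-dimensional Croke--Kleiner phenomenon as the essential difficulty.

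First I would construct $X$. View $G$ as the fundamental group of a tree of groups with central vertex $\Z^{n+m}$ acting on $\R^{n+m}$ and two leaves $G_-\times\Z^n$ and $\Z^m\times G_+$, the edge groups $\Z^n$ and $\Z^m$ being the two complementary coordinate subflats of $\Z^{n+m}$. Gluing copies of $X_{G_-}\times\R^n$ and $\R^m\times X_{G_+}$ equivariantly to the $\R^{n+m}$-flats along these subflats, in the pattern dictated by the Bass--Serre tree, produces a CAT(0) space $X$ carrying a geometric $G$-action; the CAT(0) property follows from the gluing lemma since the gluing is along convex subflats. The translates of the edge flats $\R^n$ and $\R^m$ provide the candidate separating subspaces $X_C$.

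Second, I would reduce to a base case by fattening one leaf at a time. Setting $G_-=G_+=\Z$ collapses $G$ to the right-angled Artin group $A_\Gamma$ on the graph $\Gamma$ whose vertices are $u,x_1,\dots,x_n,y_1,\dots,y_m,v$, with $u$ joined to every $x_i$, the set $\{x_i\}\cup\{y_j\}$ spanning a complete graph, and $v$ joined to every $y_j$. The four vertices $u,x_1,y_1,v$ span an induced $P_4$, so $CK$ sits in $A_\Gamma$ as a parabolic, and the aim is to show $\bnd S_\Gamma$ is not path connected by means of Corollary \ref{cor:raagscor}, i.e.\ by exhibiting for $\Gamma$ a graph-of-groups decomposition of the type in Figure \ref{fig:graph} with each $H_i$ a proper parabolic of this $CK$. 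Granting the base case, I would recover $G$ by two amalgamations: first $A_\Gamma\ast_{\Z^{m+1}}(G_+\times\Z^m)$, upgrading $\langle v\rangle$ to $G_+$ along $\langle v,y_1,\dots,y_m\rangle\cong\Z^{m+1}$, and then amalgamating the result with $G_-\times\Z^n$ along $\langle u,x_1,\dots,x_n\rangle\cong\Z^{n+1}$. Each amalgamation is an instance of $A\ast_C B$, and I would apply Theorem \ref{thm:notpathconn}: the separating $X_C$ is the relevant translated flat, condition (2) is the non-path-connectedness of the previous stage, and conditions (1) and (3) come from the block decomposition of the current $X_A$ and the inclusion $\Lambda(C)\subset\nexus(X_A)$ (see Section \ref{section block}).

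The hard part will be twofold, and it is precisely what confines Theorem \ref{thm:mooney} to $n=1$. First, Theorem \ref{thm:notpathconn} concludes only that $\bnd X$ fails to be path connected; to perform the second amalgamation using the first as input I must \emph{upgrade} this conclusion, re-deriving a connected block decomposition of $X_{A_\Gamma\ast_{\Z^{m+1}}(G_+\times\Z^m)}$ together with a nexus containing $\Lambda(\Z^{n+1})$. The genuine content is therefore that the hypotheses of the combination theorem are preserved under the amalgamation, so that the two fattenings can be carried out in succession. Second, and more seriously, once $n\geq 2$ or $m\geq 2$ the edge groups are flats $\R^n,\R^m$ whose limit sets are spheres $S^{n-1},S^{m-1}$, and verifying that these limit spheres lie in the nexus---together with the base-case non-path-connectedness of $\bnd S_\Gamma$ when $\Gamma$ carries these higher-dimensional complete joins---is the higher-dimensional analogue of the Croke--Kleiner obstruction. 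Controlling how geodesics may be routed around higher-dimensional walls, rather than around lines, is the real obstacle standing between the proven cases and the full conjecture.
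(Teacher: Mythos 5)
This statement is one of the paper's \emph{conjectures}: the paper proves it only for $(n,m)=(1,1)$ and $(1,2)$ (Theorem \ref{thm:mooney}) and explicitly leaves the general case open, so there is no proof to compare against, and your proposal does not close the gap --- it is a plan that candidly defers its two ``hard parts,'' which are precisely the open content. Note also that in the cases it can handle, the paper does not iterate the combination theorem as you propose: it exhibits a \emph{single} copy of $CK$ mixing an infinite-order $a\in G_-$, an infinite-order $d\in G_+$ and the central flats (identifying $\langle a\rangle\times\Z$ with $\langle a,b\rangle$, etc.), and applies Theorem \ref{thm:notpathconn} once, with $X_A$ the Salvetti complex of that $CK$ and finite-itinerary edge groups $\Z^2$. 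This maneuver is exactly what forces the restriction to small $n,m$: for larger ranks the subgroup generated by $a$, $d$ and the middle $\Z^{n+m}$ is your RAAG $A_\Gamma$, not $CK$.

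Moreover, one concrete step of your plan provably fails once $n,m\geq 2$: the base case cannot be obtained from Corollary \ref{cor:raagscor}. In your graph $\Gamma$, the vertices $x_1$ and $x_2$ have identical links apart from one another, so no induced $P_4$ contains two of the $x_i$ (or two of the $y_j$); every induced $P_4$ is of the form $(u,x_i,y_j,v)$. For a decomposition as in Figure \ref{fig:graph} with such a central $CK$, each leaf would have to attach along a parabolic generated by a proper subset of $\{u,x_i,y_j,v\}$; but any outside vertex $x_k$ is adjacent to $u,x_i,y_j$ and to some $y_l$, which is adjacent to $v$, so the full subgraph a leaf must carry is all of $\Gamma$. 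Equivalently, every subset of $V(\Gamma)$ separating $u$ from $v$ contains all the $x_i$ or all the $y_j$, so every visual splitting of $A_\Gamma$ is over a parabolic containing $\Z^n$ or $\Z^m$, and for $n,m\geq 2$ no such group is a parabolic of any $CK$ in $\Gamma$. (When $\min(n,m)=1$ the base case does go through, e.g.\ via $A_\Gamma=CK\ast_{\langle u,x_1,y_1\rangle}B$ and Theorem \ref{thm:otherparaspli}, consistent with the cases the paper proves.) Finally, your ``upgrade'' step --- extracting from Theorem \ref{thm:notpathconn}, whose only output is non-path-connectedness, a connected block decomposition of the glued space with $\Lambda(\Z^{n+1})\subset\nexus$ --- is asserted as the task rather than performed; the paper's own iteration remark sidesteps this only by keeping two fixed non-connectable points inside a fixed $\bnd CK$, which is unavailable here since $\Z^n$ and $\Z^m$ do not lie in any $CK$. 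Identifying these obstacles is accurate diagnosis, but the proposal proves nothing beyond Theorem \ref{thm:mooney}.
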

	
\begin{Outline}
	In Section \ref{section: background}, we provide the necessary background information on $\cat(0)$ boundaries, block decompositions, and right-angled Artin groups. Section \ref{section: amalgams} is devoted to proving Theorem \ref{thm:notpathconn}. We do this by showing if a closed set $C$ separates closed sets $A$ from $B$ in $X$, then $\bnd C$ separates $\bnd A$ and $\bnd B$ in $\bnd X$. We show that the hypotheses for Theorem \ref{thm:notpathconn} force this type of separation. In Section \ref{subsection: RAAG}, we apply Theorem \ref{thm:notpathconn} to RAAGs and groups coming from \cite{Moo10}. Lastly, in Section \ref{section: nonunique}, we prove Theorem \ref{thm: nonhomeo}.
\end{Outline}

	\begin{ack}
The authors thank Mike Mihalik and Kim Ruane for bringing the question of path-connectedness of boundaries to our attention. 
	\end{ack}

	\section{Preliminaries} \label{section: background}
	
	\subsection{$\cat(0)$ spaces, groups, and boundaries}
	
	For a thorough introduction to $\cat(0)$ spaces and groups, see \cite{BH99}. Throughout this section, let $X$ be a proper, complete $\cat(0)$ space. 
	
	\begin{defn}[Visual boundary and the cone topology]
		Fix a basepoint $q\in X$. The set $\bnd_q X$ consists of geodesic rays based at $q$. Let $\overline{B}(q,r)$ be the closed ball of raduis $r$ about $q$. Let $\pi_r:X\to \overline{B}(q,r)$ be the projection map. Define the following sets in $\overline{X}=X\cup \bnd_q X$:
		
		\[ U(c,r,D) := \{ x\in \overline{X} : d(x,q)>r, d(\pi_r(x),c(r))<D\}\]
		where $c$ is a geodesic segment or ray. Sets of this form along with balls in $X$ make up a neighborhood basis for $\overline{X}$. This is called the \textit{cone topology}.
		
		Restricting the cone topology to $\bnd_q X$, the neighborhood basis can be restated as follows:
		\[U(c,r,D) = \{ c'\in \bnd_q X: d(c(r),c'(r)< D\}\]
		for any choice of $c\in \bnd_q X$, $D>0$, $r>0$.
	\end{defn}
	
	\begin{prop}[{\cite[II.8]{BH99}}]
		For any two $q,q'\in X$, $\bnd_q X$ and $\bnd_{q'} X$ are homeomorphic. Because of this, we denote $\bnd X$ to be the visual boundary with the cone topology.
	\end{prop}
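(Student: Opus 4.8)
The plan is to build an explicit bijection between $\bnd_q X$ and $\bnd_{q'}X$ out of asymptotic rays and then check it is a homeomorphism. First I would record that two geodesic rays $c,c'$ are \emph{asymptotic} when $t\mapsto d(c(t),c'(t))$ is bounded, and that this is an equivalence relation on rays. The set-level heart of the statement is the lemma: for every ray $c$ and every point $q'\in X$ there is a \emph{unique} ray $c'$ with $c'(0)=q'$ asymptotic to $c$. Existence uses properness: the geodesic segments $[q',c(n)]$ form an equicontinuous (indeed $1$-Lipschitz) family with values in compact balls, so a standard Arzel\`a--Ascoli and diagonal argument produces a subsequential limit ray $c'$ from $q'$, and convexity of the metric on the triangles $(q,q',c(n))$ shows $d(c(t),c'(t))\le d(q,q')$ for all $t$, so $c'$ is asymptotic to $c$. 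Uniqueness is immediate from convexity: if $c_1',c_2'$ both issue from $q'$ and are asymptotic to $c$, then $t\mapsto d(c_1'(t),c_2'(t))$ is convex, bounded, and vanishes at $0$, hence is identically $0$. Setting $f\colon\bnd_q X\to\bnd_{q'}X$, $c\mapsto c'$, uniqueness together with the symmetry of the asymptotic relation makes $f$ a bijection whose inverse is the analogous map from $q'$ to $q$.

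Next I would prove that $f$ is continuous. Write $a=d(q,q')$, so that the convexity estimate above gives $d(c(t),c'(t))\le a$ for all $t$. Since $X$ is proper, $\bnd_q X$ is metrizable and it suffices to check sequential continuity. Suppose $\tilde c_n\to c$ in $\bnd_q X$, which unwinds to $d(\tilde c_n(r),c(r))\to 0$ for each fixed $r$, and fix $r'$; I must show $d(\tilde c_n'(r'),c'(r'))\to 0$. Writing $\gamma_m=[q',c(m)]$ and $\tilde\gamma_{n,m}=[q',\tilde c_n(m)]$, I would split
\[
d(\tilde c_n'(r'),c'(r'))\le d(\tilde c_n'(r'),\tilde\gamma_{n,m}(r'))+d(\tilde\gamma_{n,m}(r'),\gamma_m(r'))+d(\gamma_m(r'),c'(r')).
\]
The crucial point is a convexity estimate that is \emph{uniform in $n$}: comparing the two geodesics issuing from $q'$ given by $\tilde c_n'|_{[0,m]}$ and $\tilde\gamma_{n,m}$, whose endpoints satisfy $d(\tilde c_n(m),\tilde c_n'(m))\le a$, convexity of the metric (with a minor reparametrization absorbed into the constant) yields $d(\tilde c_n'(r'),\tilde\gamma_{n,m}(r'))\le 2r'a/m$, and the same bound controls the third term. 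Choosing $m>6r'a/\varepsilon$ makes the first and third terms each $<\varepsilon/3$ for \emph{every} $n$; then, with $m$ fixed, $\tilde c_n(m)\to c(m)$ forces $\tilde\gamma_{n,m}\to\gamma_m$ by continuity of geodesics in a $\cat(0)$ space, so the middle term is $<\varepsilon/3$ for $n$ large. Hence $f$ is continuous.

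Finally I would conclude. Because $X$ is proper, both $\bnd_q X$ and $\bnd_{q'}X$ are compact and Hausdorff, so the continuous bijection $f$ is automatically a homeomorphism; alternatively, continuity of $f^{-1}$ follows by running the same argument with $q$ and $q'$ interchanged. I expect the main obstacle to be the continuity step, and specifically arranging that the estimate on $d(\tilde c_n'(r'),\tilde\gamma_{n,m}(r'))$ be \emph{uniform in the sequence index $n$}: this uniformity, coming from the single bound $d(\tilde c_n(m),\tilde c_n'(m))\le a$ fed into the convexity of the metric on triangles based at $q'$, is exactly what permits choosing $m$ before $n$ and so makes the three-term splitting go through.
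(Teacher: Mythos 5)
The paper offers no proof of this proposition at all---it simply cites Bridson--Haefliger II.8---and your argument is correct and is essentially the standard one found in that source: construct the basepoint-change map via asymptotic rays (existence by Arzel\`a--Ascoli on the segments $[q',c(n)]$, uniqueness by convexity of $t\mapsto d(c_1'(t),c_2'(t))$), then verify continuity with convexity estimates. The two points you flag as delicate---the reparametrization bookkeeping in the bound $2r'a/m$ and the fact that the estimate $d(\tilde c_n(m),\tilde c_n'(m))\le a$ is uniform in $n$, allowing $m$ to be chosen first---are indeed exactly what makes the three-term splitting work, so there is no gap to report.
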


\begin{defn}[$\mathcal{Z}$-set]
	A closed subset $Z$ in a compact absolute neighborhood retracts (ANR) $Y$ is a $\mathcal{Z}$-set if the one of following equivalent conditions holds:
	\begin{enumerate}
		\item For every open set $U\subset Y$, $U-Z\hookrightarrow U$ is a homotopy equivalence.
		\item For every closed $A\subset Z$, there is a homotopy $H:Y\times [0,1]\to Y$ such that $H_0$ is the identity, $H_t|A$ is the inclusion map, and $H_t(Y-A)\subset Y-Z$ for all $t>0$.  
	\end{enumerate}
\end{defn}
It is known that $\overline{X}$ is an (ANR) \cite{GuilZStructure}, $\overline{X}$ and $\bnd X$ are compact \cite{BH99}, and $\bnd X$ is a $Z$-set \cite{BestLocal} in $\overline{X}$. The second point of the definition will be used in the proof of Lemma \ref{lem:separate}.
	
		\begin{defn}[$n$-connected]
		A topological space $Z$ is $n$-connected for $n\geq 1$ if $Z$ is non-empty, path connected, and $\pi_i(Z)= 0$ for $1\leq i \leq n$. A space is $0$-connected if it is non-empty and path connected.
	\end{defn}
	
	\subsection{Blocks and Itineraries}\label{section block}
	
	\begin{defn}[{\cite[Defintion 3.1]{Moo10}}]
		A block decomposition $\B$ of a $\cat(0)$ space $X$ is a collection of closed, convex sets call \textit{blocks} such that
		\begin{enumerate}
			\item 	$X = \bigcup \limits_{B\in \B} B$,
			\item each block intersects at least two other blocks,
			\item Parity condition: every block has a $(+)$ or $(-)$ parity such that two blocks intersect only if they have opposite parity,
			\item $\epsilon$-condition: there is an $\epsilon>0$ such that two blocks intersect iff their $\epsilon$-neighborhoods intersect.
		\end{enumerate}
	\end{defn}
	 We say a block decomposition is \textit{connected} if $\bigcup\limits_{B\in \B} \bnd B$ is path connected. When $\B$ is a connected block decomposition, we shall refer to the path component of $\bnd X$ containing a block boundary (equivalently all the block boundaries) as the \textit{nexus} of $X$ and will denote it $\nexus(X)$.
	
	A \textit{wall} is a non-trivial intersection of blocks and the set of walls is denoted $\W$. A set $C$ \textit{separates} $A$ and $B$ if every path from $A$ to $B$ passes through $C$. When $X$ has a block decomposition, if $B\cap B' = W$ for some $W\in \W$, then $W$ separates $B$ and $B'$.
	
	The \textit{nerve} of a block decomposition, denoted $\nerve(\B)$, is a graph which records (non-trivial) block intersections. There is a vertex for each block and an edge if two blocks intersect. Following results from \cite{CK00} and \cite{Moo10}, we get that the nerve is always a tree.
	\begin{lem}
		$\nerve(\B)$ is a tree.
	\end{lem}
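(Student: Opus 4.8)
The plan is to show that $\nerve(\B)$ is a connected, acyclic graph, which is precisely the definition of a tree. Connectivity should follow from property (1) of the block decomposition together with property (2), which forces the union of blocks to hang together; since $X$ is connected and is covered by blocks that each meet at least two others, the nerve cannot fall into disconnected pieces. The real content is \emph{acyclicity}, and this is where the parity condition and the $\epsilon$-condition will do the work, in a manner following \cite{CK00} and \cite{Moo10} as the excerpt indicates.

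\smallskip

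\noindent\textbf{Acyclicity via walls and separation.} The key step I would carry out is to argue that each wall $W = B \cap B'$ separates $X$, as recorded in the definition: ``if $B \cap B' = W$ for some $W \in \W$, then $W$ separates $B$ and $B'$.'' The plan is to exploit the fact that blocks are closed and \emph{convex} subsets of a CAT(0) space. Convexity is crucial: in a CAT(0) space a convex set is contractible and, more importantly, the intersection of two convex sets is convex, so each wall is itself convex and hence connected. I would then suppose toward a contradiction that $\nerve(\B)$ contains an embedded cycle $B_0, B_1, \dots, B_k = B_0$. Walking around this cycle, the parity condition (property (3)) forces the blocks to alternate $(+)$ and $(-)$ parity, so any cycle must have even length. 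The geometric heart of the argument is that each consecutive wall $B_{i} \cap B_{i+1}$ separates $X$ into two pieces, one containing the interior of $B_i$ and one containing the interior of $B_{i+1}$; a cycle in the nerve would then produce a loop in $X$ that is forced to cross a separating wall an inconsistent number of times, contradicting that $W$ genuinely separates. The $\epsilon$-condition (property (4)) is what guarantees these separations are ``uniform'' and that distinct walls along the cycle are genuinely disjoint, so that no two nonadjacent blocks of the cycle share points that would let the loop avoid a crossing.

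\smallskip

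\noindent\textbf{The main obstacle} I expect is making the separation-counting rigorous: I must show that going around a nerve cycle in $X$ yields a closed loop that crosses some fixed wall an \emph{odd} number of times while separation forces an \emph{even} number, or equivalently that the convex blocks meeting in a cycle would force a convex set to separate two of its own ``sides'' contradictorily. This is precisely the point where convexity of blocks and the global nonpositive curvature of $X$ must be combined: in a CAT(0) space, a closed convex set $W$ has the property that $X \setminus W$ has a well-defined structure, and one uses nearest-point projection onto $W$ (which is well-defined and distance-nonincreasing in CAT(0) spaces) to detect crossings. I would set up the argument so that projecting the hypothetical loop onto one of the separating walls yields the required parity contradiction. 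Once acyclicity and connectivity are both established, the conclusion that $\nerve(\B)$ is a tree is immediate, and I would close by noting that this is exactly the structure isolated in \cite{CK00,Moo10}, so the lemma is a direct adaptation of their arguments to the present axiomatization of block decompositions.
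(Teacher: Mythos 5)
The paper never proves this lemma at all --- it is stated as a consequence of results in \cite{CK00} and \cite{Moo10} --- so your proposal has to stand on its own, and as written it has two genuine gaps, one of which you flag yourself. The first is connectivity: it does \emph{not} follow from properties (1) and (2). Condition (2) (each block meets at least two others) is perfectly compatible with the nerve breaking into several components. What you actually need is that $X$ is connected (it is geodesic) together with the $\epsilon$-condition: if the blocks split into two families $\B_1,\B_2$ with no intersections across the split, then condition (4) makes the open $\epsilon$-neighborhoods of $\bigcup\B_1$ and $\bigcup\B_2$ disjoint, and these two open sets cover $X$, contradicting connectivity. The $\epsilon$-neighborhoods are essential here because with infinitely many blocks the sets $\bigcup\B_i$ need not be closed, so one cannot argue with a closed partition directly.

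The second gap is the acyclicity mechanism. A mod-$2$ crossing count is the wrong tool, and the difficulty you anticipate is real: a path can touch a wall without crossing it or travel inside it for a whole interval, and there is no transversality available for arbitrary closed convex sets, so ``odd versus even number of crossings'' cannot be made rigorous in this generality. But no counting is needed; a single forced crossing suffices. Take a \emph{shortest} cycle $B_0,B_1,\dots,B_{k-1},B_0$ in the nerve; it is chordless, and $k\geq 4$ because the parity condition makes the nerve bipartite (this is the only place parity is needed --- it rules out triangles, for which the argument below fails). Pick $p_i\in B_i\cap B_{i+1}$ (indices mod $k$) and join $p_1$ to $p_2$ to $\cdots$ to $p_{k-1}$ by geodesics; by convexity each geodesic lies in a single block, so the concatenation lies in $B_2\cup\cdots\cup B_{k-1}$. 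Chordlessness forces $p_1\in B_1\setminus W$ and $p_{k-1}\in B_0\setminus W$, where $W=B_0\cap B_1$. The separation property stated just before the lemma (which both you and the paper take as given from \cite{Moo10}; proving it from axioms (1)--(4) is itself nontrivial) forces this path to meet $W$, so some $B_i$ with $2\leq i\leq k-1$ meets both $B_0$ and $B_1$, producing a chord --- contradiction. Note finally that your appeal to the $\epsilon$-condition to conclude that ``distinct walls along the cycle are genuinely disjoint'' is a misreading of condition (4), which compares block intersections with neighborhood intersections; the disjointness you want comes from chordlessness of a shortest cycle, not from $\epsilon$.
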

	Since $\nerve(\B)$ is a tree, we will denote it $\T_\B$ or $\T$ when $\B$ is understood. 
	
	Fix a basepoint $x_0\in X$ which is not in any wall. We say a geodesic ray $r\colon[0,\infty)\to X$ based at $x_0$ \textit{enters} a block $B$ if for some time $t>0$, $r(t)\in B$ and $r(t)\notin B'$ for any other block $B'$. Associated to each ray is a \textit{itinerary}, denoted $\Itin_{x_0}(r)$, which is the sequence of blocks $r$ enters. When the basepoint is understood we will denote this $\Itin(r)$. If $r$ represent the point $\alpha\in \bnd X$, then we will write $\Itin(\alpha)$ to denote $\Itin_{x_0}(r)$.  
	
	There are two types of itineraries: finite and infinite. A point $\alpha\in \bnd X$ has a finite itinerary if $\alpha\in \bnd B$ for some block $B$ and has an infinite itinerary otherwise. A boundary point having finite or infinite itinerary is independent of choice of basepoint (see \cite{Moo10}). Let $\Itin(\alpha)=B_1,...,B_n$ and let $v_1,...,v_n$ be the vertices in $\T$ associated to these blocks. The geodesic in $\T$ between $v_1$ and $v_n$ consists exactly of the vertices $v_1,...,v_n$. Furthermore, if $\Itin(\alpha)$ is infinite, then the associated path is the vertices of a geodesic ray in $\T$ starting at $v_1$. From this we can define a map $p\colon\bnd X\to \T\cup \bnd T$ which sends $\alpha$ to the last block in its itinerary or to the boundary point of $\bnd \T$ associated to the itinerary of $\alpha$. 
	
	We say that a geodesic ray is {\em lonely} if it is the only geodesic with its itinerary. 

	Finally we say that a subgroup is {\em finite itinerary} if every word in the subgroup has bounded itinerary. This is equivalent to saying that the subgroup stabilises a vertex of $\T$.

	\subsection{Right-angled Artin groups and Salvetti Complex}
	
	Throughout this section, let $\Gamma$ be a graph. 
	
	\begin{defn}[Right-angled Artin group]
		The \textit{right-angled Artin group} with defining graph $\Gamma$, denoted $A_\Gamma$, is the group with presentation
		
		\[A_\Gamma  = \langle V(\Gamma)| [v_i,v_j]=1 \iff \{v_i,v_j\}\in E(\Gamma)\rangle \]
		
	\end{defn}
	
	\begin{defn}
		Given a flag complex $\Gamma$ we define $X_{\Gamma}$ as follows:
		
		For each vertex $v_i$ in $\Gamma$, let $S^1_{v_i} = S^1$ be a copy of the circle cubulated with 1 vertex. For each simplex $\sigma = [v_0, \dots, v_n]$ of $\Gamma$ there is an associated torus $T_{\sigma} = S^1_{v_0}\times\dots\times S^1_{v_n}$. If $\tau <\sigma$, then there is a natural inclusion $T_{\tau}\hookrightarrow T_{\sigma}$. Now define
		
		$$X_{\Gamma} = \left.\coprod_{\sigma<\Gamma} T_{\sigma}\middle/\sim\right.$$
		where the equivalence relation $\sim$ is generated by the inclusions $T_{\tau}\hookrightarrow T_{\sigma}$. 
		
		The {\em Salvetti complex} $S_{\Gamma}$ for $A_{\Gamma}$ is the universal cover of $X_{\Gamma}$. 
	\end{defn}

\begin{remark}
	Some authors refer to $X_\Gamma$ as the Salvetti complex and $S_\Gamma$ as its universal cover. We are interested in $S_\Gamma$ so shall refer to it as the Salvetti complex.
\end{remark}
	
	\begin{prop}
		The 1-skeleton of the Salvetti complex for $A_{\Gamma}$ is isomorphic to the Cayley graph of $A_{\Gamma}$. 
	\end{prop}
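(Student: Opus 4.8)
The plan is to run the standard covering-space argument that identifies the $1$-skeleton of a universal cover with a Cayley graph, while keeping careful track of the cell structure of $X_\Gamma$. First I would observe that $X_\Gamma$ has a single $0$-cell: every torus $T_\sigma$ is assembled from the circles $S^1_{v_i}$, each cubulated with one vertex, so all of the tori share the unique vertex and $X_\Gamma^{(0)} = \{*\}$. The $1$-cells of $X_\Gamma$ are exactly the circles $S^1_{v_i}$, one oriented loop at $*$ for each $v_i \in V(\Gamma)$; thus $X_\Gamma^{(1)}$ is a rose (a wedge of $|V(\Gamma)|$ circles) whose edges are naturally labeled by the generators $V(\Gamma)$.

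Next I would use that the covering map $p \colon S_\Gamma \to X_\Gamma$ is cellular and that $p^{-1}(X_\Gamma^{(1)}) = S_\Gamma^{(1)}$, so $p$ restricts to a covering $S_\Gamma^{(1)} \to X_\Gamma^{(1)}$ on $1$-skeleta. The deck transformation group of the universal cover is $\pi_1(X_\Gamma) = A_\Gamma$, acting freely and cellularly on $S_\Gamma$. Since $X_\Gamma$ has a single vertex, the induced action of $A_\Gamma$ on the fiber $p^{-1}(*) = S_\Gamma^{(0)}$ is free and transitive; fixing a basepoint lift $\tilde{x}_0$ then gives a bijection $A_\Gamma \to S_\Gamma^{(0)}$, $g \mapsto g\cdot \tilde{x}_0$, which identifies the vertices of $S_\Gamma^{(1)}$ with the elements of $A_\Gamma$.

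Then I would analyze the edges. Each $1$-cell of $S_\Gamma$ lies over the labeled loop $v_i$ for a unique $v_i$, so the edges of $S_\Gamma^{(1)}$ inherit a labeling by $V(\Gamma)$. By unique path lifting, for each vertex $g\cdot\tilde{x}_0$ and each generator $v_i$ there is exactly one $v_i$-labeled edge starting there, and its terminal vertex is $(g v_i)\cdot \tilde{x}_0$ under the standard convention relating path lifting to the deck action. Hence $S_\Gamma^{(1)}$ has vertex set $A_\Gamma$ with a single $v_i$-edge from $g$ to $g v_i$ for every $g \in A_\Gamma$ and every $v_i \in V(\Gamma)$, which is precisely the Cayley graph $\mathrm{Cay}(A_\Gamma, V(\Gamma))$.

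The main thing to get right is bookkeeping rather than any deep obstacle: one must fix orientation and labeling conventions so that left versus right multiplication matches between the deck action and path lifting, and one must verify that the correspondence is a graph isomorphism and not merely a label-preserving surjection. The latter is immediate, since both graphs have, from each vertex, exactly one edge of each label $v_i \in V(\Gamma)$, so the label-preserving vertex bijection extends uniquely to an edge bijection.
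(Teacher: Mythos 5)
Your proof is correct, and in fact the paper offers no proof at all for this proposition: it is stated as a standard fact about Salvetti complexes, so there is nothing to compare against on the paper's side. Your covering-space argument is the standard one and it goes through: $X_\Gamma$ has a unique $0$-cell, its $1$-skeleton is a rose labeled by $V(\Gamma)$, the deck group acts freely and transitively on the vertex fiber, and unique path lifting matches the labeled edges of $S_\Gamma^{(1)}$ with right multiplication by generators, giving $\mathrm{Cay}(A_\Gamma, V(\Gamma))$. The only step you lean on without comment is the identification $\pi_1(X_\Gamma) \cong A_\Gamma$; if you want the argument to be self-contained, add the one-line justification that the $2$-skeleton of $X_\Gamma$ is exactly the presentation complex of $A_\Gamma$ (one loop per generator, one square per commutator relation, with higher tori attached along cells that do not affect $\pi_1$), so van Kampen gives the isomorphism. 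Your closing remark about orientation and left-versus-right conventions is the right thing to flag, since that is the only place where an incautious reader could set up the bijection so that it fails to be equivariant; with the convention you state, the edge bijection is forced by the vertex bijection, exactly as you say.
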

	
	\begin{defn}
		Let $A_{\Gamma}$ be a RAAG we say that $H\leq A_{\Gamma}$ is a {\em parabolic} subgroup, if it is generated by a subset of the vertices of $\Gamma$. 
	\end{defn}
	
	Parabolic subgroups are convex in the sense that they stabilise a convex subcomplex of the Salvetti complex.
	
	We will study the RAAG $CK$ defined by a path with 4 vertices with the following presentation $$CK = \langle a, b, c, d\mid [a, b], [b, c], [c, d]\rangle.$$ This group has a splitting as $(F_2\times\Z)\ast_{\Z^2}(F_2\times\Z) = \langle a, b, c\rangle_{\langle b, c\rangle}\langle b, c, d\rangle$. Whenever we refer to a splitting of $CK$ throughout the paper this will be the splitting.
	
	\begin{lem}
		The Salvetti complex for $CK$ has a connected block decomposition.
	\end{lem}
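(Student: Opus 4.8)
The plan is to read the block decomposition directly off the splitting $CK = \langle a,b,c\rangle \ast_{\langle b,c\rangle}\langle b,c,d\rangle$ and its Bass--Serre tree $T$. Since parabolic subgroups of a RAAG stabilise convex subcomplexes of the Salvetti complex, for each vertex of $T$ --- that is, each coset $g\langle a,b,c\rangle$ or $g\langle b,c,d\rangle$ --- I would take the corresponding convex subcomplex $B_g = g\cdot B_e \subset S_\Gamma$ as a block. Because $\langle a,b,c\rangle$ is the RAAG on the path $a-b-c$, it splits as $\langle a,c\rangle \times \langle b\rangle \cong F_2 \times \Z$, so $B_g$ is isometric to $T_4 \times \R$, where $T_4$ is the Cayley graph of $F_2$ (a regular tree) and $\R$ is the axis of $b$; the same holds for the $\langle b,c,d\rangle$-blocks. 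The walls $\W$ are then the translates of the subcomplex stabilised by $\langle b,c\rangle \cong \Z^2$, each isometric to a flat plane $\R^2$, and these are exactly the intersections of adjacent blocks. I would give blocks arising from cosets of $\langle a,b,c\rangle$ parity $(+)$ and those from $\langle b,c,d\rangle$ parity $(-)$.

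Next I would verify the four axioms. Every cell of $S_\Gamma$ lies in a lift of one of the maximal tori $T_{ab}, T_{bc}, T_{cd}$ (there are no triangles in $P_4$, so $S_\Gamma$ is $2$-dimensional and these are the only maximal tori), and each such lift lies in a vertex-group block, so the blocks cover $X$ (axiom 1). Each block contains one wall for every coset of $\langle b,c\rangle$ in its vertex group, and there are infinitely many such cosets, so every block meets infinitely many others (axiom 2). The parity condition (axiom 3) is immediate from the fact that $T$ is bipartite between the two conjugacy classes of vertex groups, so adjacent --- hence intersecting --- blocks have opposite parity. For the $\epsilon$-condition (axiom 4) I would use cocompactness: there are finitely many $CK$-orbits of pairs of blocks, so non-intersecting blocks are uniformly separated.

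Having a block decomposition, I would compute the boundaries. Recall that the visual boundary of a product of $\cat(0)$ spaces is the spherical join of the factors' boundaries, so $\bnd B = \bnd(T_4\times\R) = \bnd T_4 \ast \bnd \R$; here $\bnd T_4$ is a Cantor set and $\bnd\R = S^0$, so $\bnd B$ is the suspension of a Cantor set, which is path connected (every point joins to a suspension cone point). Likewise each wall $W\cong\R^2$ has $\bnd W = S^1 \ne \emptyset$, and since $W\subset B$ is convex we have $\bnd W \subset \bnd B$ for both blocks meeting along $W$. To conclude that $\bigcup_{B}\bnd B$ is path connected, I would walk along the nerve: given $\alpha\in\bnd B$ and $\beta\in\bnd B'$, take the geodesic $B=B_0,B_1,\dots,B_k=B'$ in the tree $\T=\nerve(\B)$, connect $\alpha$ inside the path-connected $\bnd B_0$ to a point of the shared circle $\bnd W_1\subset \bnd B_0\cap\bnd B_1$, continue inside $\bnd B_1$ to $\bnd W_2$, and so on, finally reaching $\beta$ in $\bnd B_k$. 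Since each block boundary is path connected and consecutive ones overlap in a non-empty wall boundary, this produces the required path, so the decomposition is connected.

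The main obstacle I anticipate is justifying the intersection pattern underlying axioms 2--4: that two blocks intersect precisely when their vertices are adjacent in $T$, that adjacent blocks meet exactly in the single $\langle b,c\rangle$-plane, and that same-parity blocks are disjoint. This is the standard geometry of a graph of groups with convex vertex and edge spaces acting on a $\cat(0)$ space, and follows from convexity of the parabolic subcomplexes together with the tree structure of the splitting; once it is in place, the $\epsilon$-condition and the connectivity argument are routine.
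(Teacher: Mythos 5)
Your proposal is correct and takes essentially the same approach as the paper: blocks are the translates of the convex subcomplexes corresponding to $\langle a,b,c\rangle$ and $\langle b,c,d\rangle$ in the splitting $(F_2\times\Z)\ast_{\Z^2}(F_2\times\Z)$, each isometric to $T_4\times\R$ with boundary the suspension of a Cantor set, and connectivity of $\bigcup_{B}\bnd B$ follows by chaining these path-connected block boundaries through the $S^1$ boundaries of shared walls. The paper's proof is a terser version of exactly this argument (ordering the blocks so each meets a previous one in an $S^1$, rather than walking along the nerve tree), so your verification of the block-decomposition axioms simply fills in details the paper leaves implicit.
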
 
	\begin{proof}
		The blocks are copies of $T_4\times \R$ coming from the splitting $(F_2\times\Z)\ast_{\Z^2}(F_2\times\Z)$. The union of the block boundaries is connected since each block boundary is the suspension of a Cantor set and they can be ordered in a way such that each intersects one of the previous block boundaries in $S^1$. 
	\end{proof}

	\section{Boundaries of amalgamations} \label{section: amalgams}

	In this section we will study how the boundary of an amalgamated free product can be related to the boundaries of the component pieces. 
	
	%
	
	\begin{lem}\label{lem:separate}
		Let $X$ be a $\cat(0)$ space and let $A$ and $B$ be closed subsets of $X$. Suppose there exists a closed subset $C$ such that any geodesic from $A$ to $B$ passes through $C$. Then any path in the boundary between $\Lambda(A)$ and $\Lambda(B)$ passes through $\Lambda(C)$. 
	\end{lem}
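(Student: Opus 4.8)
The plan is to argue by contraposition, using the $\mathcal{Z}$-set property of $\bnd X$ in $\overline{X}$ exactly as flagged after the definition. Reading $\Lambda(A)=\overline{A}\cap\bnd X$ (and similarly for $B$ and $C$), suppose there were a path $\gamma\colon[0,1]\to\bnd X$ with $\gamma(0)\in\Lambda(A)$, $\gamma(1)\in\Lambda(B)$, and $\gamma([0,1])\cap\Lambda(C)=\emptyset$. The goal is to manufacture from $\gamma$ an honest path inside $X$ running from a point of $A$ to a point of $B$ and avoiding $C$; since every geodesic (hence, in the situation of interest where $C$ is convex and cuts $X$ into two half-spaces, every path) from $A$ to $B$ must meet $C$, this will be the contradiction.

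First I would cap off the two ends of $\gamma$ inside $\overline{X}$. Because $\gamma(0)\in\Lambda(A)\subset\overline{A}$ while $\gamma(0)\notin\Lambda(C)$, and $\overline{C}$ is closed with $\gamma(0)\notin\overline{C}$, I can pick a path-connected neighborhood $N_0$ of $\gamma(0)$ in $\overline{X}$ that is disjoint from $\overline{C}$ (using that $\overline{X}$ is an ANR, hence locally path connected). As $\gamma(0)\in\overline{A}$, the set $N_0\cap A$ is nonempty; choose $a\in N_0\cap A$ together with a path in $N_0$ from $a$ to $\gamma(0)$, which then misses $\overline{C}$. Repeating at $\gamma(1)$ with a point $b\in B$ and concatenating, I obtain a path $\sigma\colon[0,1]\to\overline{X}\setminus\overline{C}$ from $a\in A$ to $b\in B$. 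Its image is compact and disjoint from the closed set $\overline{C}$, hence lies at some positive distance from $\overline{C}$ in a metric on the compact space $\overline{X}$. Now I apply condition (2) of the $\mathcal{Z}$-set definition with the empty boundary set to push $\sigma$ off of $\bnd X$: this yields a homotopy $H\colon\overline{X}\times[0,1]\to\overline{X}$ with $H_0=\mathrm{id}$ and $H_t(\overline{X})\subset X$ for every $t>0$. By uniform continuity of $H$ on the compact set $\sigma([0,1])\times[0,1]$, for all small enough $t>0$ the path $H_t\circ\sigma$ stays within that positive distance of $\sigma$, so it avoids $C$ and lies in $X$; splicing in the short traces $s\mapsto H_s(a)$ and $s\mapsto H_s(b)$ reconnects the moved endpoints to $a$ and $b$ within $X\setminus C$. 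The net result is a genuine path in $X\setminus C$ joining $a\in A$ to $b\in B$.

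The hard part will be turning this path into the contradiction cleanly. If one takes the separation hypothesis in the form given by the \emph{separates} definition (every \emph{path} from $A$ to $B$ meets $C$), the contradiction is immediate, since I have produced a path from $A$ to $B$ missing $C$. The genuine delicacy, and where I expect to spend the most care, is the control in the previous paragraph: ensuring the capped path and its $\mathcal{Z}$-set pushoff both stay off $C$ (the compactness/positive-distance and uniform-continuity estimates) while keeping the endpoints anchored in $A$ and $B$. A secondary subtlety is reconciling the literally \emph{geodesic} hypothesis with the path I construct: a path in $X\setminus C$ only certifies that $a,b$ lie in one component of $X\setminus C$, so to contradict the geodesic condition one should use that $C$ in fact disconnects $X$ into two pieces with $A$ and $B$ on opposite sides. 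In that setting the constructed path is connected, avoids $C$, yet has its ends near $\gamma(0)\in\Lambda(A)$ and $\gamma(1)\in\Lambda(B)$ on the two different sides, which is impossible; verifying that the pushed endpoints land on the correct sides is the last point I would need to pin down.
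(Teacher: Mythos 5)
Your argument is correct, and it is essentially the paper's argument run in the contrapositive direction; the differences in execution are worth recording. The paper argues directly: using a $\mathcal{Z}$-set homotopy whose end-tracks run along the geodesic rays from a basepoint to $\gamma(0)$ and $\gamma(1)$, it pushes $\gamma$ to paths $H(\cdot,s)$ in $X$; separation forces each of these to cross $C$, the crossing points escape to infinity as $s\to 1$, and compactness of $C\cup\Lambda(C)$ together with continuity of $H$ produces a subsequential limit lying both in $\Lambda(C)$ and on $\gamma$. You instead suppose $\gamma$ misses $\Lambda(C)$, cap its ends into $A$ and $B$ via local path connectivity of the ANR $\overline{X}$, and use the positive distance from the compact capped path to $\overline{C}$ plus uniform continuity of the homotopy to produce a path in $X\setminus C$ from $A$ to $B$. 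What each buys: the paper's direct version exhibits the actual point of $\gamma\cap\Lambda(C)$ and needs no capping, but it quietly requires the endpoint-tracking refinement of the homotopy, which is not part of the abstract $\mathcal{Z}$-set property and forces a specific choice (e.g.\ geodesic contraction to the basepoint); your version needs only the bare $\mathcal{Z}$-set homotopy, at the cost of the capping step. Finally, your concern about geodesics versus paths is well founded, and it is a defect of the statement rather than of your proof: with the literal hypothesis the lemma is false --- in $X=\R^2$ take $A=\{(x,0):x\le -1\}$, $B=\{(x,0):x\ge 1\}$, $C=\{(0,0)\}$; every geodesic from $A$ to $B$ passes through the origin, yet $\Lambda(C)=\emptyset$ while the boundary circle joins $\Lambda(A)$ to $\Lambda(B)$. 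The paper's own proof makes exactly the substitution you do, invoking ``$C$ separates $A$ and $B$'' in the path sense of Section \ref{section block}, which is also the form in which the lemma is applied in Theorem \ref{thm:notpathconn}.
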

	
	\begin{proof}
		Fix a basepoint $x_0\in X$ and let $\gamma:[0,1]\to \bnd X$ be a path starting at $\alpha\in \bnd A$ and ending at $\beta\in \bnd B$. By the $\mathcal{Z}$-set property of $\overline{X}$, there is a homotopy $H:[0,1]\times [0,1]\to \overline{X}$ of $\gamma$ such that $H(t,1)=\gamma(t)$ and $H(t,s)\subset X$ for all $s\neq 1$. Furthermore, $H(0,s)$ is along the geodesic from $x_0$ to $\alpha$ and $H(1,s)$ is along the geodesic from $x_0$ to $\beta$. Since $C$ separates $A$ and $B$, for sufficiently large $s$, any path from $H(0,s)$ to $H(1,s)$ passes through $C$. This gives a sequence of points in $C$ which tend to infinity. Since $C\cup \Lambda(C)$ is compact, this sequence has a subsequence which converges in $\Lambda(C)$. By the construction, this point is along $\gamma$. 
	\end{proof}

	Using the above Lemma we can deduce our main theorem. 

	\begin{thm}\label{thm:notpathconn}
		Let $G = A\ast_C B$ be a CAT(0) group acting geometrically on a CAT(0) space $X$. Suppose $A$ and $C$ act geometrically on subspaces $X_A$ and $X_C$, respectively. Furthermore, suppose $X_C$ and its translates separate $X_A$ from the rest of $X$. Lastly, suppose $X_A$ satisfies the following:
		\begin{enumerate}
			\item $X_A$ has a connected block decomposition,
			\item $\bnd X_A$ is not path connected, and
			\item $\Lambda(C)\subset\nexus(X_A)$
		\end{enumerate}
		then $\bnd X$ is not path connected.
	\end{thm}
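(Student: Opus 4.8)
The plan is to show that a boundary point of $X_A$ lying outside the nexus stays ``trapped'' in its own path component after we pass to the larger boundary $\bnd X$. Since $\bnd X_A$ is not path connected and the connected block decomposition guarantees that $\nexus(X_A)$ is a single path component, I can fix $\alpha\in\bnd X_A$ lying in some path component $P\neq\nexus(X_A)$, so that $P\cap\nexus(X_A)=\emptyset$. Note that every finite-itinerary point lies in some $\bnd B\subseteq\nexus(X_A)$, so $\alpha$ necessarily has infinite itinerary. The goal then becomes to show that the path component $Q$ of $\alpha$ in $\bnd X$ is exactly $P$; since $\nexus(X_A)\neq\emptyset$ and $\nexus(X_A)\cap P=\emptyset$, this proves $Q\subsetneq\bnd X$ and hence that $\bnd X$ is not path connected.

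First I would set up the separating data in the boundary. Let $S=\bigcup_{g\in A}gX_C$ be the union of the translates of $X_C$ incident to $X_A$; these are the edge-spaces of the splitting meeting the vertex space $X_A$, and since $C\le A$ we have $X_C\subseteq X_A$ and hence $S\subseteq X_A$ and $\Lambda(S)\subseteq\Lambda(X_A)=\bnd X_A$. By hypothesis $S$ separates $X_A$ from the rest $Y:=\overline{X\setminus X_A}$, so Lemma \ref{lem:separate} applies: every path in $\bnd X$ from $\bnd X_A$ to $\Lambda(Y)$ must meet $\Lambda(S)$. Because $S\subseteq X_A$ one also checks that any ray either stays eventually in $X_A$, stays eventually in $Y$, or crosses $S$ infinitely often, so that $\bnd X=\bnd X_A\cup\Lambda(Y)$; thus ``leaving $\bnd X_A$'' means entering $\Lambda(Y)$. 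The key structural input I must establish is the inclusion $\Lambda(S)\subseteq\nexus(X_A)$: hypothesis (3) gives $\Lambda(C)\subseteq\nexus(X_A)$, and since the geometric action of $A$ permutes the blocks it preserves $\bigcup_B\bnd B$ and therefore fixes the path component containing this connected set, i.e.\ $g\cdot\nexus(X_A)=\nexus(X_A)$ for all $g\in A$, whence $g\,\Lambda(C)\subseteq\nexus(X_A)$.

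The heart of the argument is a first-hitting-time argument showing that no path from $\alpha$ can reach $\Lambda(S)$. Suppose $\gamma$ were a path with $\gamma(0)=\alpha$ meeting $\Lambda(S)$, and let $\tau$ be the first such time; as $\Lambda(S)$ is closed, $\gamma(\tau)\in\Lambda(S)\subseteq\nexus(X_A)$. For $t<\tau$ the point $\gamma(t)$ cannot lie in $\Lambda(Y)\setminus\bnd X_A$, for then $\gamma|_{[0,t]}$ would be a path from $\bnd X_A$ to $\Lambda(Y)$ and Lemma \ref{lem:separate} would force it to hit $\Lambda(S)$ strictly before time $\tau$. Using $\bnd X=\bnd X_A\cup\Lambda(Y)$ this forces $\gamma([0,\tau))\subseteq\bnd X_A$, and by closedness $\gamma([0,\tau])\subseteq\bnd X_A$. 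Hence $\gamma|_{[0,\tau]}$ is a path \emph{inside} $\bnd X_A$ from $\alpha\in P$ to $\gamma(\tau)\in\nexus(X_A)$, contradicting $P\neq\nexus(X_A)$. With this claim in hand, $Q$ meets neither $\Lambda(S)$ nor (consequently) $\Lambda(Y)\setminus\bnd X_A$, so $Q\subseteq\bnd X_A$; since $Q$ is path connected and contains $\alpha$ we get $Q=P$, which is the desired proper path component.

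I expect the main obstacle to be the bookkeeping of the third paragraph together with fully justifying $\Lambda(S)\subseteq\nexus(X_A)$. The difficulty is that a path in $\bnd X$ is free to wander out of $\bnd X_A$, so one must genuinely use that every exit is gated through $\Lambda(S)$ and that $\Lambda(S)$ sits inside the nexus; the delicate point is that $\Lambda(S)$ can in principle contain limit points of the family $\{g\Lambda(C)\}_{g\in A}$ coming from infinitely many incident edge-spaces, and one must verify that these limit points also lie in $\nexus(X_A)$ rather than only the individual translates $g\Lambda(C)$. Establishing this, via the $A$-invariance of the nexus and the connectedness of $\bigcup_B\bnd B$, is the crux on which the separation-in-the-boundary mechanism, and hence the whole proof, rests.
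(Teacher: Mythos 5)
Your overall strategy is the paper's own: use Lemma \ref{lem:separate} to show that any path leaving $\bnd X_A$ must cross the limit set of the separating translates, place that limit set inside $\nexus(X_A)$ via hypothesis (3), and contradict the non-path-connectedness of $\bnd X_A$. However, your implementation contains a fatal flaw, and it is exactly the step you yourself flag as ``the crux'' and leave unverified: the inclusion $\Lambda(S)\subseteq\nexus(X_A)$ for $S=\bigcup_{g\in A}gX_C$. This inclusion is not merely hard to prove --- it is \emph{inconsistent with the hypotheses}. Since $A$ acts cocompactly on $X_A$ and $S$ contains the orbit $A\cdot x_0$ of any point $x_0\in X_C$ (every $a\in A$ satisfies $ax_0\in aX_C\subseteq S$), the set $S$ is coarsely dense in $X_A$, and therefore $\Lambda(S)=\bnd X_A$. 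Thus $\Lambda(S)\subseteq\nexus(X_A)$ would say precisely that $\bnd X_A$ is path connected, contradicting hypothesis (2). With $\Lambda(S)=\bnd X_A$, your separation statement (``every exit is gated through $\Lambda(S)$'') becomes vacuous, and your first-hitting-time is $\tau=0$, so no contradiction is ever produced. Your correct observation that $\Lambda$ of an infinite union exceeds the union of the $\Lambda$'s is not a bookkeeping nuisance to be absorbed --- it is the reason this architecture cannot work.

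The paper avoids this by keeping the gates separate rather than aggregating them: the hypothesis that ``$X_C$ and its translates separate $X_A$ from the rest of $X$'' is used translate-by-translate. Each component $U$ of $X\setminus X_A$ has frontier a \emph{single} translate $gX_C$ (this is the Bass--Serre picture of edges incident to the vertex stabilized by $A$), so Lemma \ref{lem:separate} applied to the triple $(X_A,\overline{U},gX_C)$ shows that a path from $\bnd X_A$ into $\Lambda(\overline{U})$ must meet $g\Lambda(C)$ --- a single circle's worth of limit set, not $\Lambda(S)$. The needed inclusion is then only $g\Lambda(C)\subseteq\nexus(X_A)$ for each individual $g\in A$, which follows from hypothesis (3) together with the $A$-invariance of the nexus; this invariance argument you did supply, and it is the right ingredient. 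In the paper's proof, once the path from $p\in\nexus(X_A)$ to $q$ is known to cross such a translate of $\Lambda(C)$ whenever it exits $\bnd X_A$, the initial and final portions of the path can be taken inside $\bnd X_A$ ending on nexus points, and splicing through the path-connected nexus yields a path from $p$ to $q$ inside $\bnd X_A$, the desired contradiction. In short: your error is collapsing infinitely many separating sets into one closed set \emph{before} passing to limit sets; the limit set of the union swallows all of $\bnd X_A$, while the union of the individual limit sets is what hypothesis (3) actually controls.
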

	\begin{proof}
		Since $\nexus(X_A)$ is path connected and $\bnd X_A$ is not path connected, we can find a point $p$ in $\nexus(X_A)$ and a point $q$ in $\bnd X_A$ which cannot be connected by a path.
		
		Suppose that we can find a path $\gamma\colon I\to \bnd X$ from $p$ to $q$. This path cannot be contained in $\bnd X_A$ as such $\gamma$ passes through $\Lambda(C)$. This gives paths from $p$ and $q$ to $\Lambda(C)$. However $\Lambda(C)$ is contained in $\nexus(X)$. Since $\nexus(X)$ is path connected, we can then obtain a path from $p$ to $q$ in $\bnd X_A$ giving a contradiction. Thus there is no path from $p$ to $q$ in $\bnd X$ completing the proof. 
	\end{proof}
	
	\subsection{Applications to right-angled Artin groups} \label{subsection: RAAG}
	
	We now move onto looking at some key examples coming from right angled Artin groups. 
	
	\begin{prop}\label{prop:amalgoverfiniteitin}
		Let $A_{\Gamma}$ be a RAAG such that $A_{\Gamma} = CK\ast_C B$ where $C$ is a finite itinerary parabolic subgroup. Then $\partial S_{\Gamma}$ is not path connected. 
	\end{prop}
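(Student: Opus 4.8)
The plan is to invoke Theorem \ref{thm:notpathconn} directly, using the given splitting $A_\Gamma = CK\ast_C B$ with $X = S_\Gamma$ and $A = CK$. The first task is to produce the ambient subspaces. Since $CK$ is the parabolic subgroup $\langle a,b,c,d\rangle$ of $A_\Gamma$ and parabolic subgroups are convex, $CK$ acts geometrically on a convex subcomplex $X_A\subseteq S_\Gamma$ which is isometric to the Salvetti complex of $CK$; likewise $C$ acts geometrically on a convex subcomplex $X_C$. The vertex and edge subcomplexes of the splitting realise $S_\Gamma$ as a tree of spaces over the Bass--Serre tree of $CK\ast_C B$, with the edge spaces being exactly the translates of $X_C$. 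Standard tree-of-spaces theory then gives that a geodesic leaving the vertex space $X_A$ must cross a translate of the edge space $X_C$, so $X_C$ and its translates separate $X_A$ from the rest of $S_\Gamma$, which is precisely the input Lemma \ref{lem:separate} consumes.

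It then remains to verify the three numbered conditions on $X_A$. Condition (1) is immediate: $X_A$ is the Salvetti complex of $CK$, which has a connected block decomposition by the earlier lemma, its blocks being the copies of $T_4\times\R$ coming from the splitting $(F_2\times\Z)\ast_{\Z^2}(F_2\times\Z)$. Condition (2), that $\bnd X_A$ is not path connected, is the Croke--Kleiner phenomenon for $CK$ recorded above: the visual boundary of the Salvetti complex of $CK$ is not path connected.

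The crux is condition (3), $\Lambda(C)\subseteq\nexus(X_A)$, and this is exactly where the finite-itinerary hypothesis enters. Because $C$ is a finite-itinerary subgroup, it stabilises a vertex of the nerve tree $\T$ of the block decomposition of $X_A$, equivalently $C$ preserves a single block $B_0$. Then for any basepoint $x_0\in B_0$ the orbit $C\cdot x_0$ stays in the closed convex set $B_0$, so every accumulation point of the orbit lies in $\bnd B_0$, giving $\Lambda(C)\subseteq\bnd B_0$. Since $\bnd B_0$ is a block boundary it lies in $\bigcup_{B\in\B}\bnd B$, the path-connected set defining $\nexus(X_A)$; hence $\Lambda(C)\subseteq\nexus(X_A)$. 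With all hypotheses in place, Theorem \ref{thm:notpathconn} yields that $\bnd S_\Gamma$ is not path connected. The step demanding genuine care is the passage from ``finite itinerary'' to $\Lambda(C)\subseteq\bnd B_0$: one must confirm that stabilising a nerve vertex really confines the \emph{entire} limit set to one block boundary (rather than spreading across several blocks), and that it is convexity of $B_0$ that licenses the orbit-closure argument. This is also the point that explains why the finite-itinerary assumption cannot be dropped, in line with the $F_2\ast_\Z F_2$ cautionary example.
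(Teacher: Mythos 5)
Your proposal is correct and follows essentially the same route as the paper: invoke Theorem \ref{thm:notpathconn} with $X_A$ the convex subcomplex for the parabolic $CK$, use the connected block decomposition of its Salvetti complex for condition (1), the Croke--Kleiner result for (2), and the finite-itinerary hypothesis for (3). Your verification of (3) via block stabilisation (a finite-itinerary subgroup fixes a vertex of $\T$, so the orbit stays in a single closed convex block and $\Lambda(C)\subseteq\bnd B_0$) is a slightly more careful rendering of the paper's one-line argument that every ray in $C$ ends in a block boundary, but it is the same idea, resting on the paper's stated equivalence in the definition of finite itinerary.
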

	\begin{proof}
		Since $C$ is a parabolic subgroup of $A_{\Gamma}$ it acts on a convex subspace of $S_{\Gamma}$ it also separated the space $A_{\Gamma}$. The Salvetti complex for $CK$ has a connected block decomposition. This comes from the splitting of the form $(F_2\times \Z)\ast_{\Z^2}(F_2\times \Z)$. The subspaces are copies of $T_4\times \R$ and the walls are $\R^2$. The block decomposition is connected since each block boundary is the suspension of a Cantor set and these block boundaries intersect along an $S^1$.
		
		Finally since $C$ is a finite itinerary parabolic every ray in $C$ ends in the boundary of a block and as such $\Lambda(C)$ is contained in the nexus of the block decomposition. We can now apply Theorem \ref{thm:notpathconn}.
	\end{proof}
	
	We obtain the following corollary immediately:
	\begin{corollary}
		Let $\Gamma$ be a path with at least 4 vertices. Then $\bnd S_{\Gamma}$ is not path connected. 
	\end{corollary}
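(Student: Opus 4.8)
The plan is to realize $A_\Gamma$ as an amalgam of the form $CK\ast_C B$ with $C$ a finite itinerary parabolic subgroup of $CK$, and then quote Proposition \ref{prop:amalgoverfiniteitin}. Label the vertices of the path as $v_1,v_2,\dots,v_n$ with $n\geq 4$ and edges $\{v_i,v_{i+1}\}$. First I would observe that the full subgraph on $\{v_1,v_2,v_3,v_4\}$ is again a path on four vertices, so the corresponding parabolic subgroup $A_{\{v_1,v_2,v_3,v_4\}}$ is exactly $CK$ under the identification $a=v_1$, $b=v_2$, $c=v_3$, $d=v_4$.

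Next I would produce the desired splitting. Since $\Gamma$ is a path, deleting the vertex $v_4$ disconnects it, and there are no edges between $\{v_1,v_2,v_3\}$ and $\{v_5,\dots,v_n\}$. Writing $\Gamma_1$ for the full subgraph on $\{v_1,\dots,v_4\}$ and $\Gamma_2$ for the full subgraph on $\{v_4,\dots,v_n\}$, the standard visual decomposition of a RAAG along a separating parabolic gives $A_\Gamma = A_{\Gamma_1}\ast_{\langle v_4\rangle} A_{\Gamma_2} = CK\ast_{\langle d\rangle} A_{\Gamma_2}$, with $C=\langle v_4\rangle=\langle d\rangle$ and $B=A_{\Gamma_2}$.

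Then I would check that $C=\langle d\rangle$ is a finite itinerary parabolic subgroup of $CK$. It is parabolic since it is generated by a single vertex. For the finite itinerary condition, recall that the block decomposition of $S_{CK}$ comes from the splitting $CK=\langle a,b,c\rangle\ast_{\langle b,c\rangle}\langle b,c,d\rangle$; since $d$ lies in the vertex group $\langle b,c,d\rangle$, the subgroup $\langle d\rangle$ fixes a vertex of the associated Bass--Serre tree, i.e.\ it stabilises a vertex of $\T_\B$, which is precisely the condition that $\langle d\rangle$ be finite itinerary. With the splitting and this condition in hand, Proposition \ref{prop:amalgoverfiniteitin} applies and shows $\bnd S_\Gamma$ is not path connected.

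The only point needing separate attention is the base case $n=4$, where $A_\Gamma=CK$ and the amalgam above degenerates to $B=C$. Here there is nothing to separate, and one instead invokes directly that $\bnd S_{CK}$ is not path connected, which is the defining property of $CK$ used throughout the paper. I expect the main (though modest) obstacle to be the bookkeeping verifying that $\langle d\rangle$ is elliptic in the block splitting; everything else is a routine application of the visual decomposition of RAAGs together with Proposition \ref{prop:amalgoverfiniteitin}.
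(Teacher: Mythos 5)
Your proof is correct and is precisely the argument the paper leaves implicit when it says the corollary follows ``immediately'' from Proposition \ref{prop:amalgoverfiniteitin}: split $A_\Gamma$ visually over the separating vertex $v_4$ to obtain $CK\ast_{\langle d\rangle}A_{\Gamma_2}$, and observe that $\langle d\rangle$ is a finite itinerary parabolic because it lies in the vertex group $\langle b,c,d\rangle$ of the block splitting, hence stabilises a vertex of $\T$. Your separate handling of the degenerate case $n=4$, where the claim is exactly the Croke--Kleiner result \cite{CK00} for $S_{CK}$, is a sensible detail the paper glosses over.
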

	
	We can also do the case that the parabolics are not finite itinerary. 
	
	\begin{lem}\label{lem:parabolicsarelonely}
		Let $H$ be a proper parabolic subgroup of $CK$ with an infinite itinerary ray $\alpha$. Let $\beta$ be an infinite itinerary ray in $CK$. Suppose $\beta$ is not in $H$. Then $\alpha$ and $\beta$ have different itineraries.  
	\end{lem}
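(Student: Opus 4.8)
The plan is to argue the contrapositive: assuming $\alpha$ and $\beta$ have the same itinerary, I will show that $\beta$ in fact lies in the convex subcomplex $Y_H$ stabilised by $H$, i.e.\ that $\beta$ is ``in $H$''. First I would record which proper parabolics can even support an infinite itinerary ray. Writing the splitting as $\langle a,b,c\rangle \ast_{\langle b,c\rangle}\langle b,c,d\rangle$, a parabolic omitting $a$ lies in the vertex group $\langle b,c,d\rangle$ and one omitting $d$ lies in $\langle a,b,c\rangle$; in either case it stabilises a vertex of $\T$ and so is finite itinerary. Hence $H$ must contain both $a$ and $d$, so $H$ is one of $\langle a,d\rangle$, $\langle a,b,d\rangle$, $\langle a,c,d\rangle$, and $Y_H$ is a convex subcomplex meeting each block and each wall in a convex subcomplex.

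Having the same itinerary means $\alpha$ and $\beta$ pass through exactly the same sequence of blocks $B_0, B_1, B_2,\dots$, and therefore cross the same sequence of walls $W_i = B_i\cap B_{i+1}$, in the same order. I would then exploit the product structure coming from the splitting: each block is isometric to $T_4\times\R$, and all walls of a given block have the form $(\text{line in }T_4)\times\R$ with a \emph{common} $\R$-factor (the central direction of the vertex group). The key structural feature is the Croke--Kleiner mechanism: the $\R$-factor of $B_i$ sits inside the wall $W_i$ as a line that is a \emph{tree} direction of the neighbouring block $B_{i+1}$, and symmetrically the $\R$-factor of $B_{i+1}$ is a tree direction of $B_i$. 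Projecting the segment $\beta\cap B_{i+1}$ to the tree factor of $B_{i+1}$, it must run between the two lines cut out by $W_i$ and $W_{i+1}$, and hence traverse the bridge between them; uniqueness of geodesics in a tree then pins this projection down.

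The heart of the argument, and the step I expect to be the main obstacle, is converting this block-by-block rigidity into global rigidity. In a single block the $\R$-direction is a genuine product factor and is a priori free, so nothing can be concluded from $B_i$ in isolation; the point is that this freedom is killed one step later, since the $\R$-direction of $B_i$ is a tree direction of $B_{i+1}$, where geodesics are rigid. Because the itinerary is infinite, every block is followed by another, so no product direction escapes being constrained. Carrying this out carefully --- tracking the bridges, ensuring that the concatenation of block segments is globally rather than merely blockwise geodesic, and propagating the constraints along the infinite itinerary --- should force the wall-crossing points of $\beta$ to coincide with those of $\alpha$. Since those points lie in the convex set $Y_H$ and $\beta$ is the geodesic they determine, convexity gives $\beta\subset Y_H$; that is, $\beta$ is in $H$, contradicting the hypothesis. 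Hence $\alpha$ and $\beta$ must have different itineraries.
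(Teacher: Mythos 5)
Your proposal has a genuine gap at exactly the step you flag as ``the main obstacle,'' and the gap is not a technicality: the claim you need --- that two rays with the same infinite itinerary must have the same wall-crossing points --- is essentially the statement that infinite itinerary rays are \emph{lonely}, and this is false for general infinite itineraries in this space. For a generic itinerary, the union of the walls and bridge strips it determines forms a Croke--Kleiner template in which there are whole families of geodesics (varying in position and slope) crossing the same sequence of walls in the same order; tree-geodesic uniqueness between consecutive walls, which is all your sketch invokes, only pins down the projection \emph{between} the two lines and says nothing about where the ray meets each wall or what it does in the central $\R$-directions. What actually makes the parabolic itineraries rigid is a degeneracy your argument never identifies or uses: for a ray in $\langle a,d\rangle$ (or $\langle a,b,d\rangle$, $\langle a,c,d\rangle$), the incoming and outgoing bridge feet on each wall \emph{coincide} at a single corner point, and a link computation at these corners forces any geodesic with that itinerary through the corners and along the bridges. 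Without isolating that feature, ``propagating the constraints along the infinite itinerary'' cannot succeed, because the constraint you are propagating simply is not there for a general wall in a general itinerary. Tellingly, the paper itself does not prove this uniqueness by hand: it is the content of the \emph{next} lemma (loneliness of infinite itinerary rays in proper parabolics), and there it is obtained by citing a nontrivial external result, Lemma 10.17 of \cite{Benzvi}, which uses the isolated flats structure of $H$.

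You are also proving a strictly stronger statement than the lemma requires. The paper's proof is much softer and purely algebraic: after the same classification of the three relevant parabolics (your reduction to $H \supseteq \{a,d\}$ is fine, and cleaner than the paper's enumeration of the 14 parabolics), it writes $\beta$ in a normal form ($w_1c^naw_2$ when $H = \langle a,b,d\rangle$, or $w_1bw_2dw_3$ when $H = \langle a,d\rangle$), reads off a single explicit vertex coset in $\Itin(\beta)$ (namely $w_1c^na\langle b,c,d\rangle$, resp.\ $w_1bw_2d\langle a,b,c\rangle$), and uses the normal form theorem for right-angled Artin groups to show that no element of $H$ can represent that coset, so no ray in $H$ can have it in its itinerary. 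That is, one only needs \emph{one} vertex of $\Itin(\beta)$ that is inaccessible to $H$-rays --- the itineraries need only be shown to differ somewhere, not to determine the rays. If you want to salvage your geometric approach, you would need to first prove the corner/degenerate-template rigidity described above (or invoke the loneliness lemma, which would make the argument circular in the context of this paper); the algebraic route avoids all of this.
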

	\begin{proof}
		There are 14 proper parabolic subgroups of $CK$. The only proper parabolic subgroups that contain infinite itinerary rays are $\langle a, b, d\rangle, \langle a, c, d\rangle$ and $ \langle a, d\rangle$. By symmetry, we just have to check the cases of $\langle a, b, d\rangle$ and $\langle a, d\rangle$. 
		
		Let us first consider the case $H = \langle a, b, d\rangle$. By moving any $b$ or $d$ to the left we can assume that $\beta$ has the form $w_1 c^n a w_2$. Where $w_1$ is a word in $H$. We see that the itinerary of $\beta$ contains the vertex $v = w_1c^na\langle b, c, d\rangle$. We will show that this vertex is not on the itinerary of any ray in $H$. 
		
		Suppose that $\alpha$ is a ray with $v$ in its itinerary. Then there is a finite prefix $w$ of $\alpha$ such that $w\langle b, c, d\rangle = v\langle b, c, d\rangle$. Thus $w^{-1}w_1c^na\in \langle b, c, d\rangle$. However by the normal forms theorem for right angled Artin groups the $a$ cannot be cancelled. Thus $v\langle b, c, d\rangle$ cannot be on the itinerary of $\alpha$. 
		
		We now consider the case of $H = \langle a, d\rangle$. Since $\beta$ is not in $H$ $\beta$ must contain at least one $b$ or at least one $c$. We will study the case the $\beta$ contains a $b$, the other case is the same. Since $\beta$ is an infinite itinerary ray it contains infinitely many occurences of $d$, as such there is at least one after the $b$. Thus $\beta$ has the form $w_1bw_2dw_3$ for words $w_i$. Thus on the itinerary of this element is the vertex $w_1bw_2d\langle a, b, c\rangle$. We will show that this vertex cannot be on the itinerary of $\alpha$. 
		
		Suppose that $w_1bw_2d\langle a, b, c\rangle$ is on the itinerary of $\alpha$. This gives an element of $h$ such that $h\langle a, b, c\rangle = w_1bw_2d\langle a, b, c\rangle$ or $h^{-1}w_1bw_2d\in\langle a, b, c\rangle$. Once again we see from the normal forms theorem for right angled Artin groups that this cannot be the case as there is no way to cancel the $d$ to the right of the $b$. 
	\end{proof}
	
	\begin{lem}
		Let $\alpha$ be an infinite itinerary ray contained in a proper parabolic subgroup $H$. Then $\alpha$ is lonely. 
	\end{lem}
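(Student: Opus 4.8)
The plan is to show that every geodesic ray $\beta$ sharing the itinerary of $\alpha$ must coincide with $\alpha$. First I would record that since $\alpha$ has infinite itinerary, so does any $\beta$ with $\Itin(\beta)=\Itin(\alpha)$. Then Lemma \ref{lem:parabolicsarelonely} applies in contrapositive form: a ray not lying in $H$ has an itinerary differing from that of $\alpha$, so every $\beta$ with $\Itin(\beta)=\Itin(\alpha)$ must have its endpoint in $\Lambda(H)$, i.e. be represented by a ray of the convex subcomplex $S_H$ stabilised by $H$. This reduces the problem to proving that the itinerary map is injective on the infinite-itinerary points of $\Lambda(H)=\bnd S_H$.

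By the $a\leftrightarrow d$, $b\leftrightarrow c$ symmetry of $CK$ and the classification in the proof of Lemma \ref{lem:parabolicsarelonely}, it suffices to treat $H=\langle a,d\rangle\cong F_2$ and $H=\langle a,b,d\rangle\cong\Z^2\ast\Z$. Next I would describe $S_H$ concretely: for $\langle a,d\rangle$ it is a tree, and for $\langle a,b,d\rangle$ it is a tree of flats $g\langle a,b\rangle\cong\R^2$ and lines $g\langle d\rangle\cong\R$ glued along the vertices $g\in\langle a,b\rangle$. In either case an infinite-itinerary geodesic ray must project to a ray going to infinity in the Bass--Serre tree of the free product, and hence passes through the sequence of attaching vertices; thus such a ray is recorded by a reduced word $w=g_1 d^{k_1}g_2 d^{k_2}\cdots$ with $g_i\in\langle a,b\rangle$ (respectively $g_i\in\langle a\rangle$) and $k_i\neq 0$, and it is uniquely determined by $w$.

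The heart of the argument is then to recover $w$ from $\Itin(\beta)$. Each block of the itinerary is a coset $u\langle a,b,c\rangle$ or $v\langle b,c,d\rangle$ recording a prefix of $w$, and I would show by induction that successive blocks pin down the syllables of $w$ one at a time: a line-block of the form $g_1\cdots\langle b,c,d\rangle$ recovers the $a$-exponent of the next $g_i$, and the following flat-block recovers its $b$-exponent together with $k_i$. The main obstacle is exactly the recovery of the $b$-exponents: since $b$ lies in the wall $\langle b,c\rangle$, it is absorbed by the type II blocks and is invisible to the line-blocks. I expect to resolve this using the right-angled Artin normal form, specifically the fact that $b$ and $d$ do not commute, so that a word $d^{-k}b^{s}d^{k'}$ with $s\neq 0$ and $k\neq 0$ is reduced and cannot lie in $\langle a,b,c\rangle$; hence distinct $b$-exponents force distinct flat-blocks. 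It follows that $\Itin(\beta)=\Itin(\alpha)$ forces $\beta$ and $\alpha$ to be recorded by the same word $w$, whence $\beta=\alpha$ and $\alpha$ is lonely.
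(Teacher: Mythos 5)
Your opening move coincides with the paper's: both proofs invoke Lemma \ref{lem:parabolicsarelonely} to conclude that any ray sharing the itinerary of $\alpha$ must lie in (the convex subcomplex $S_H$ stabilised by) $H$. Where you genuinely diverge is in the remaining step, showing that two infinite-itinerary rays inside $H$ with the same itinerary coincide. The paper dispatches this in one line by citing external machinery: $H$ is CAT(0) with isolated flats, the splitting induced from $CK=(F_2\times\Z)\ast_{\Z^2}(F_2\times\Z)$ respects the peripheral structure, and Lemma 10.17 of \cite{Benzvi} then says all infinite itinerary rays are lonely. You instead give a direct, self-contained argument: $S_H$ is a tree of flats $g\langle a,b\rangle$ and lines $g\langle d\rangle$ glued at cut vertices, so an infinite-itinerary ray is forced through those vertices and is determined by its syllable word $w$, which you then recover from the itinerary by normal-form computations --- correctly isolating the real difficulty, namely that the $b$-exponents are absorbed by the line-blocks (since $b$ lies in the wall $\langle b,c\rangle$) but are pinned down by the next flat-block because $d^{-k}b^{s}d^{k'}\notin\langle a,b,c\rangle$ when $s\neq 0$, $k\neq 0$. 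This works; the one wrinkle your induction must absorb is a degenerate syllable $g_i\in\langle b\rangle$: such a syllable is travelled entirely inside a wall, so it produces no new line-block (the adjacent line-blocks merge into a single coset of $\langle b,c,d\rangle$), and the induction is better phrased as ``each flat-block coset $w_i\langle a,b,c\rangle$ determines the full prefix $w_i$'' rather than as strict syllable-by-syllable alternation; the same normal-form argument covers this. The trade-off between the two routes: the paper's proof is two sentences but rests on verifying the hypotheses of a nontrivial result from \cite{Benzvi} (isolated flats, compatibility with the peripheral structure), while yours is longer but elementary, internal to the combinatorics of $CK$, and makes explicit exactly how the itinerary of a parabolic ray encodes the ray.
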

	\begin{proof}
		We have already seen that $\alpha$ is separated from any ray not contained in $H$. However $H$ is CAT(0) with isolated flats and the induced splitting from the splitting of $CK$ respects the peripheral structure. Thus we can apply \cite[Lemma 10.17]{Benzvi} to see that all infinite itinerary rays are lonely.
	\end{proof}

 We can restate Proposition 6.22 from \cite{Benzvi} as follows:
	
	\begin{lem}\label{lem:containedinthenexus}
		Let $\alpha$ be an infinite itinerary ray contained in a proper parabolic subgroup. Then $\alpha$ is connected to the nexus of the block decomposition of $CK$. 
	\end{lem}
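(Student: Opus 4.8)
The plan is to realize this lemma as a direct application of Ben-Zvi's general structure result \cite[Proposition 6.22]{Benzvi}, so the real work consists of checking that its hypotheses hold for $CK$. First I would reduce to a finite list of cases using the classification already carried out in the proof of Lemma~\ref{lem:parabolicsarelonely}: up to the order-two symmetry of $CK$ interchanging $a\leftrightarrow d$ and $b\leftrightarrow c$, the only proper parabolic subgroups carrying an infinite itinerary ray are $H=\langle a,b,d\rangle$ and $H=\langle a,d\rangle$. It therefore suffices to treat these two parabolics.

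Next I would record the geometric features of $H$ that feed into \cite{Benzvi}. Since $a,d$ are non-adjacent in $P_4$ while $a,b$ are adjacent, one has $\langle a,d\rangle\cong F_2$ and $\langle a,b,d\rangle\cong \Z^2\ast\Z$; in either case $H$ is a \CAT(0) group with isolated flats. Moreover, the splitting $CK=(F_2\times\Z)\ast_{\Z^2}(F_2\times\Z)$ restricts to a splitting of $H$ whose edge and vertex groups are the intersections of $H$ with conjugates of the factors, and this induced splitting respects the peripheral (maximal-flat) structure of $H$. These are exactly the inputs used in the preceding lemma to apply \cite[Lemma 10.17]{Benzvi}, and they are what allow the block decomposition of $S_{CK}$ to restrict to a compatible block decomposition on the convex subcomplex stabilized by $H$. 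With these verifications in place, \cite[Proposition 6.22]{Benzvi} applies and produces a path in $\bnd S_{CK}$ from the endpoint of $\alpha$ into a block boundary, that is, into $\nexus(CK)$, which is the desired conclusion.

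The step I expect to require the most care is the translation of conventions between the two papers: one must confirm that the ``nexus'' attached here to the block decomposition of $S_{CK}$ coincides with the main path component identified in \cite[Proposition 6.22]{Benzvi}, and that the block decomposition induced on the subcomplex for $H$ agrees with the isolated-flats structure of $H$ rather than being merely compatible with it. The underlying geometry is forced by the isolated flats, so the content is largely bookkeeping; but it is genuine bookkeeping, because an infinite itinerary ray that is \emph{lonely} (as guaranteed by the previous lemma) is nonetheless required to be path-connected to the nexus, and it is precisely Ben-Zvi's proposition that supplies this connection rather than any formal consequence of loneliness.
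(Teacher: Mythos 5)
Your proposal takes essentially the same route as the paper: the paper offers no proof at all, simply declaring the lemma to be a restatement of \cite[Proposition 6.22]{Benzvi}, which is exactly the external result you invoke. Your additional hypothesis-checking (the reduction to $\langle a,b,d\rangle$ and $\langle a,d\rangle$, the isolated-flats structure of $F_2$ and $\Z^2\ast\Z$, and the compatibility of the induced splitting with the peripheral structure) is more diligence than the paper records, but it mirrors how the paper justifies applying \cite[Lemma 10.17]{Benzvi} in the preceding lemma, so the substance of the two arguments is the same.
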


	\begin{thm}\label{thm:otherparaspli}
		Let $A_{\Gamma}$ be a RAAG which splits as $CK\ast_H B$ where $H$ is a proper parabolic subgroup. Then $\partial S_{\Gamma}$ is not path connected. 
	\end{thm}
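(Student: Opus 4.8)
The plan is to reduce Theorem \ref{thm:otherparaspli} to the main combination theorem (Theorem \ref{thm:notpathconn}) by verifying its three hypotheses for the subspace $X_A = S_{CK}$, the Salvetti complex of $CK$ sitting inside $S_\Gamma$. The overall strategy mirrors the proof of Proposition \ref{prop:amalgoverfiniteitin}, but now the amalgamating parabolic $H$ may carry infinite-itinerary rays, so the easy argument that $\Lambda(H)$ lands in the nexus no longer applies directly. First I would record that, since $H$ is a parabolic subgroup of $A_\Gamma$, it stabilizes a convex subcomplex of $S_\Gamma$, and the splitting $A_\Gamma = CK \ast_H B$ gives a Bass--Serre tree whose action realizes $X_C = S_H$ (the convex hull of $H$) and its translates as separating $X_A = S_{CK}$ from the rest of $S_\Gamma$. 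This establishes the ambient separation hypothesis of Theorem \ref{thm:notpathconn}.

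Next I would check conditions (1) and (2) of Theorem \ref{thm:notpathconn}. Condition (1), that $S_{CK}$ has a connected block decomposition, is exactly the content of the lemma proving that the Salvetti complex for $CK$ has a connected block decomposition, using the splitting $(F_2\times\Z)\ast_{\Z^2}(F_2\times\Z)$ with blocks $T_4\times\R$ whose boundaries are suspensions of Cantor sets meeting along circles. Condition (2), that $\bnd S_{CK}$ is not path connected, follows from the same structural analysis (the Croke--Kleiner phenomenon for $CK$). These two points are essentially quoted from the earlier material and require no new work.

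The crux is condition (3): that $\Lambda(H) \subset \nexus(S_{CK})$. This is where the two possibilities for rays in $H$ must be handled separately. For finite-itinerary rays in $H$, the endpoint lies in the boundary $\bnd B$ of some block, hence in the nexus by definition, exactly as in Proposition \ref{prop:amalgoverfiniteitin}. For infinite-itinerary rays, I would invoke Lemma \ref{lem:containedinthenexus}, which is precisely the statement that an infinite-itinerary ray contained in a proper parabolic subgroup of $CK$ is connected to the nexus of the block decomposition. Combining these, every point of $\Lambda(H)$ lies in the path component $\nexus(S_{CK})$, giving condition (3).

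The main obstacle, and the reason for the auxiliary lemmas, is condition (3) in the infinite-itinerary case: a priori an infinite-itinerary endpoint of a ray in $H$ could fail to be connected within $\bnd S_{CK}$ to the block boundaries. The work of Lemmas \ref{lem:parabolicsarelonely} and \ref{lem:containedinthenexus} (restating results from \cite{Benzvi}) is exactly what rules this out, by showing such rays are lonely and still connect to the nexus. Once all three hypotheses are verified, Theorem \ref{thm:notpathconn} applies verbatim to conclude that $\bnd S_\Gamma$ is not path connected, completing the proof.
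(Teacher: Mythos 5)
Your proposal is correct and follows essentially the same route as the paper: reduce to Theorem \ref{thm:notpathconn} using $X_{CK}$ with its connected block decomposition and non-path-connected boundary (as in Proposition \ref{prop:amalgoverfiniteitin}), and verify $\Lambda(H)\subset\nexus(X_{CK})$ via Lemma \ref{lem:containedinthenexus}. Your write-up is in fact slightly more careful than the paper's, since you explicitly split $\Lambda(H)$ into the finite-itinerary points (which lie in block boundaries) and the infinite-itinerary points (covered by Lemma \ref{lem:containedinthenexus}), a case distinction the paper leaves implicit.
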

	\begin{proof}
		From the proof of Proposition \ref{prop:amalgoverfiniteitin} we can see that we have a subspace $X_{CK}$ which has a connected block deomcposition and boundary which is not path connected. Lemma \ref{lem:containedinthenexus} shows us that $\Lambda(H)\subset \nexus(X_{CK})$ and so we can apply Theorem \ref{thm:notpathconn}. 
	\end{proof}
	
	\begin{corollary}
		Let $\Gamma$ be a graph isomorphic to a circle with at least five vertices. Then $\partial S_{\Gamma}$ is path disconnected. 
	\end{corollary}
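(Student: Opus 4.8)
The plan is to realize the cycle's RAAG as an amalgam of the form $CK \ast_H B$ with $H$ a proper parabolic subgroup, so that Theorem \ref{thm:otherparaspli} applies directly. All of the real work is combinatorial: producing the splitting and identifying the amalgamating subgroup as a proper parabolic of $CK$.

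First I would label the vertices of the $n$-cycle $v_1, \dots, v_n$, with edges exactly $\{v_i, v_{i+1}\}$ (indices mod $n$). Since $n \geq 5$, the full subgraph $\Gamma_1$ induced on the four consecutive vertices $\{v_1, v_2, v_3, v_4\}$ contains no edges beyond $\{v_1,v_2\}$, $\{v_2,v_3\}$, $\{v_3,v_4\}$: in a cycle nonconsecutive vertices are never adjacent, and $v_1, v_4$ fail to be adjacent precisely because $3 \not\equiv \pm 1 \pmod{n}$ when $n \geq 5$. Hence $\Gamma_1 \cong P_4$ and $A_{\Gamma_1} \cong CK$. This is exactly where the hypothesis $n \geq 5$ enters; for $n = 4$ one would instead obtain the $4$-cycle, whose RAAG is the join $F_2 \times F_2$, and the construction would fail.

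Next I would set $\Gamma_2$ to be the full subgraph on $\{v_1, v_4, v_5, \dots, v_n\}$ and $\Gamma_0 = \Gamma_1 \cap \Gamma_2$ the full subgraph on $\{v_1, v_4\}$. One checks directly that $\Gamma = \Gamma_1 \cup \Gamma_2$ as full subgraphs (each edge $\{v_i,v_{i+1}\}$ of the cycle lies in $\Gamma_1$ if $1 \leq i \leq 3$ and in $\Gamma_2$ otherwise), and that $\Gamma_0$ carries no edge, so $A_{\Gamma_0} = \langle v_1, v_4\rangle$ is free of rank two. By the standard visual splitting of a RAAG along a full separating subgraph (the same mechanism that yields the splitting of $CK$ used throughout the paper), we get $A_\Gamma = A_{\Gamma_1} \ast_{A_{\Gamma_0}} A_{\Gamma_2} = CK \ast_H B$ with $H = \langle v_1, v_4\rangle$ and $B = A_{\Gamma_2}$.

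Finally, under the identification $a = v_1$, $b = v_2$, $c = v_3$, $d = v_4$ of the $CK$-generators, the subgroup $H = \langle a, d\rangle$ is a proper parabolic subgroup of $CK$: parabolic because it is generated by a vertex subset, and proper because it omits $b$ and $c$. Theorem \ref{thm:otherparaspli} then applies verbatim and yields that $\partial S_\Gamma$ is not path connected. I do not anticipate a genuine obstacle here; the only care needed is the bookkeeping in the first two steps, namely confirming that the four chosen vertices span exactly $P_4$ and that the two full subgraphs cover $\Gamma$ while meeting in an edgeless pair, since the substantive argument is already packaged inside Theorem \ref{thm:otherparaspli}.
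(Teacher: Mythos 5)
Your proposal is correct and follows exactly the paper's argument: take four consecutive vertices of the cycle spanning a $P_4$ (hence a copy of $CK$), split $A_\Gamma$ over the free parabolic $\langle v_1, v_4\rangle$ generated by the two outside vertices, and invoke Theorem \ref{thm:otherparaspli}. The only difference is that you spell out the bookkeeping (fullness of the subgraphs, the separating property, and why $n\geq 5$ is needed) that the paper leaves implicit.
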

	\begin{proof}
		Take any subpath of the circle of length 4. The two outside vertices of the path give a splitting as $A_{\Gamma} = CK\ast_{F_2} B$ where $F_2$ is the parabolic given by the two outside vertices. This is a proper parabolic subgroup of $CK$ so we can apply Theorem \ref{thm:otherparaspli}. 
	\end{proof}

	It is worth noting that the idea of the previous proofs is that we start with two points which are not connected by a path in $\partial CK$ and show that they are still not connected by a path in $\partial G$. Thus we can repeat the construction as long as we know that the two points in the $\partial CK$ subgroup are not connected by a path in the previous stage. For instance we get the following:
	
	\begin{corollary}\label{cor:raagscor}
		Let $A_{\Gamma}$ be a RAAG admitting a graph of groups as in Figure \ref{fig:graph} where $H_i$ is a proper parabolic subgroup of $CK$. Then $\bnd S_{\Gamma}$ is not path connected. 
	\end{corollary}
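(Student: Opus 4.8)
The plan is to prove the statement by induction on the edges of the graph of groups, reusing the \emph{mechanism} behind Theorem \ref{thm:notpathconn} rather than the theorem itself. Since every edge group $H_i$ is a proper parabolic subgroup of $CK$, each edge is incident to the $CK$-vertex, so (for a single $CK$-vertex) the underlying graph is a star centred at $CK$ and we may write $A_\Gamma = CK\ast_{H_1} B_1\ast_{H_2} B_2\ast\cdots\ast_{H_k} B_k$, where each $B_j$ is glued to $CK$ along $H_j\leq CK$. Set $G_0 = CK$ and $G_j = G_{j-1}\ast_{H_j} B_j$, so that $G_k = A_\Gamma$, and let $Y_j\subset S_\Gamma$ be the convex subspace stabilised by $G_j$, with $X_{CK} = Y_0$; then $\bnd Y_j$ is a model for $\bnd S_{G_j}$ and $\bnd Y_k = \bnd S_\Gamma$.

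First I would fix the two disconnected points once and for all inside the $CK$-piece. Because $S_{CK}$ has a connected block decomposition while $\bnd S_{CK}$ is not path connected, there is a point $p\in\nexus(X_{CK})$ and a point $q\in\bnd X_{CK}$ lying in different path components of $\bnd X_{CK}$. The inductive claim I would carry is: $p$ and $q$ lie in different path components of $\bnd Y_j$ for every $j$. The case $j=0$ is the choice just made, and the case $j=k$ is the corollary. The point of tracking the specific pair $(p,q)$, rather than simply "the boundary is not path connected", is that $G_{j-1}$ is already an amalgam and so its subspace $Y_{j-1}$ need not admit a block decomposition; hence Theorem \ref{thm:notpathconn} cannot be reapplied verbatim, and one must instead propagate the concrete non-joinability through the induction.

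For the inductive step, suppose $p,q$ are unjoinable in $\bnd Y_{j-1}$ and view $Y_j$ as glued from $Y_{j-1}$ and the subspace $Z_j$ stabilised by $B_j$ along $X_{H_j}$. Since $H_j$ is a proper parabolic of $CK$, its subspace $X_{H_j}$ is convex and separates $Y_{j-1}$ from $Z_j$, so Lemma \ref{lem:separate} gives $\bnd Y_{j-1}\cap\bnd Z_j = \Lambda(H_j)$ and forces any path in $\bnd Y_j$ running between the two boundaries to meet $\Lambda(H_j)$. The essential input is that $\Lambda(H_j)\subset\nexus(X_{CK})$: when $H_j$ is finite itinerary this follows as in the proof of Proposition \ref{prop:amalgoverfiniteitin}, and when $H_j$ contains infinite itinerary rays it follows from Lemma \ref{lem:containedinthenexus}; in every case $\Lambda(H_j)$ lands in the path connected nexus of the $CK$-block decomposition.

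Now I would run the rerouting argument of Theorem \ref{thm:notpathconn}. Supposing for contradiction a path $\gamma\colon I\to\bnd Y_j$ from $p$ to $q$, I would cut $\gamma$ along the closed set $\gamma^{-1}(\bnd Y_{j-1})$; each complementary excursion maps into $\bnd Z_j$ and has both endpoints in $\bnd Y_{j-1}\cap\bnd Z_j = \Lambda(H_j)\subset\nexus(X_{CK})$. As $\nexus(X_{CK})$ is path connected and contained in $\bnd X_{CK}\subset\bnd Y_{j-1}$, each excursion can be replaced by an arc inside $\nexus(X_{CK})$ with the same endpoints, yielding a path from $p$ to $q$ entirely in $\bnd Y_{j-1}$ and contradicting the inductive hypothesis. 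This completes the induction, and the case $j=k$ gives the corollary. I expect the main technical obstacle to be exactly the one already present in Theorem \ref{thm:notpathconn}: performing this replacement \emph{continuously} when $\gamma$ makes infinitely many excursions into $\bnd Z_j$. This is controlled because the excursion endpoints accumulate only on the closed set $\Lambda(H_j)$ inside the path connected nexus, so the substituted arcs may be taken to shrink with the excursions; making that estimate precise is where the real care is required.
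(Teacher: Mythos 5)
Your proposal is correct and is essentially the paper's own argument: the paper derives this corollary from the remark that one fixes two points unjoinable in $\partial CK$ and repeats the amalgamation step, checking at each stage that the pair remains unjoinable in the previous stage's boundary --- exactly the induction you carry out, including the observation that Theorem \ref{thm:notpathconn} cannot simply be reapplied because the intermediate groups lack a block decomposition. Your writeup is in fact more detailed than the paper's (which offers only that remark), particularly in flagging the continuity issue when rerouting infinitely many excursions through the nexus.
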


\begin{figure}
	\begin{tikzpicture}[scale=2.5]
	\draw 	{(1, 0) node (CK) [draw,circle,fill=black,minimum size=4pt, inner sep=0pt,label=below:$CK$] {}
		(0, 0) node (B1) [draw,circle,fill=black,minimum size=4pt, inner sep=0pt,label=left:$B_1$] {}
		(0.55, 0.7) node (B2) [draw,circle,fill=black,minimum size=4pt, inner sep=0pt,label=above:$B_2$] {}	
		(1.45, 0.7) node (B3) [draw,circle,fill=black,minimum size=4pt, inner sep=0pt,label=above:$B_3$] {}
		(2, 0) node (Bn) [label=right:$B_n$] {}
		(CK) edge node[below] {$H_1$} (B1)
		(CK) edge node[left] {$H_2$} (B2)
		(CK) edge node[right] {$H_3$} (B3)
		(CK) edge node[below] {$H_n$} (Bn)
		(1.55, 0.3) node [draw,circle,fill=black,minimum size=1pt, inner sep=0pt] {}
		(1.6, 0.25) node [draw,circle,fill=black,minimum size=1pt, inner sep=0pt] {}
		(1.65, 0.2) node [draw,circle,fill=black,minimum size=1pt, inner sep=0pt] {}
	};
	\end{tikzpicture}
	\caption{A graph of groups where each $B_i$ is a RAAG and $H_i$ is a proper parabolic subgroup of $CK$. }
	\label{fig:graph}
\end{figure}
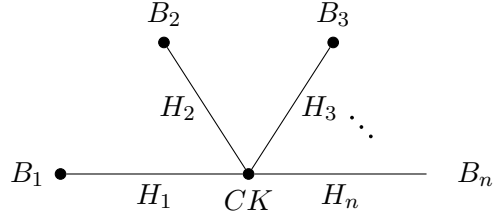
	
	In Figure \ref{fig:notappl}, we draw attention to two graphs which are not joins and our methods cannot be applied. 
	
	We also study the groups from \cite{Moo10}. These groups are of the form $(G_-\times \Z^n)\ast_{\Z^n}(\Z^n\times \Z^m)\ast_{\Z^m}(\Z^m\times G_+)$ where $G_-, G_+$ are infinite CAT(0) groups. As discussed above these groups are shown to have non-unique CAT(0) boundary \cite{Moo10}. We obtain the following theorem about path connectedness.
	
	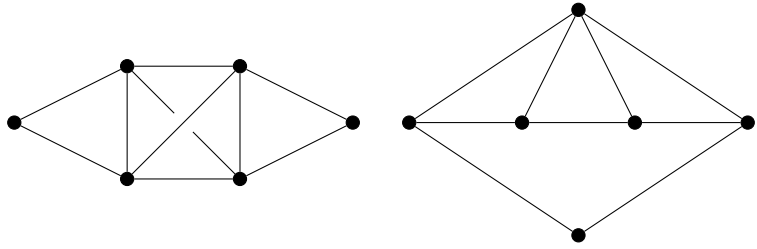
\begin{figure}
	\begin{tikzpicture}[scale =1.5]
	\tikzstyle{every node}=[draw,circle,fill=black,minimum size=5pt,inner sep=0pt]
	\draw 	{(0, 0) node (1) {}
		(1, 0.5) node (2) {}
		(1, -0.5) node (3) {}
		(2, 0.5) node (4) {}
		(2, -0.5) node (5) {}
		(3, 0) node (6) {}
		(2) edge (5)
		(3) edge[white, line width=10pt] (4)
		(1) edge (2)
		(1) edge (3)
		(2) edge (3)
		(2) edge (4)
		(3) edge (5)
		(4) edge (5)
		(4) edge (6)
		(5) edge (6)
	};
	\draw (3) -- (4);
	\begin{scope}[shift={(3.5, 0)}]
	\draw 	{(0, 0) node (1) {}
		(1, 0) node (2) {}
		(2, 0) node (3) {}
		(3, 0) node (4) {}
		(1.5, 1) node (5) {}
		(1.5, -1) node (6) {}
		(1) edge (5)
		(2) edge (5)
		(3) edge (5)
		(4) edge (5)
		(1) edge (6)
		(4) edge (6)
		(1) edge (2)
		(2) edge (3)
		(3) edge (4)
	};
	
	\end{scope}
	\end{tikzpicture}
		\caption{Two graphs for which we cannot apply our theorem and for which path connectedness of $\bnd S_{\Gamma}$ is unknown. }
		\label{fig:notappl}
	\end{figure}  
	
	\begin{thm}\label{thm:mooney}
		Let $G$ be of the form $(G_-\times \Z)\ast_{\Z}(\Z\times \Z)\ast_{\Z}(\Z\times G_+)$ or $(G_-\times \Z)\ast_{\Z}(\Z\times \Z^2)\ast_{\Z^2}(\Z^2\times G_+)$, where $G_-$ and $G_+$ are CAT(0) groups. Then $G$ acts on a CAT(0) $X$ and $\bnd X$ is not path connected. 
	\end{thm}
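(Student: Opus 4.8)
The plan is to follow the template of Theorem \ref{thm:notpathconn}: build an explicit $\cat(0)$ model $X$ for $G$ carrying a connected block decomposition, locate inside it a convex subspace behaving like the Salvetti complex of $CK$, and transfer the failure of path connectedness of the subspace's boundary to $\bnd X$ via the separation Lemma \ref{lem:separate}. First I would construct $X$ as in \cite{Moo10}: fix $\cat(0)$ spaces $Y_\pm$ with geometric $G_\pm$-actions and glue translates of the product blocks $Y_-\times\R$, the central flat $\R^{n+m}$, and $\R^m\times Y_+$ along the axes of the amalgamating cyclic (resp.\ $\Z^2$) subgroups, choosing the gluing angles in the spirit of Croke--Kleiner so that $X$ is $\cat(0)$ and $G$ acts geometrically. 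These blocks, together with the gluing flats as walls, give a block decomposition $\B$ with nerve tree $\T$. I would then verify that $\B$ is connected: each block boundary is a join of a sphere with $\bnd Y_\pm$ (or a sphere), hence path connected, and adjacent block boundaries meet in the limit set of a wall (an $S^0$ or an $S^1$); following $\T$, the union of the block boundaries is path connected, so $\nexus(X)$ is path connected.

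Next I would isolate the Croke--Kleiner behaviour. Choosing infinite order elements $g_\pm\in G_\pm$, the subgroup $\langle g_-,s,t,g_+\rangle$ (where $s,t$ generate the amalgamating cyclic groups) is isomorphic to $CK$ and stabilizes a convex subcomplex $X_A\cong S_{CK}$; in the second case one takes the same $CK$, with the extra central generator contributing only to the surrounding flats rather than to $X_A$ itself. By the lemma identifying the block decomposition of the Salvetti complex of $CK$, the subspace $X_A$ has a connected block decomposition and, crucially, $\bnd X_A$ is not path connected. I would then pick $p\in\nexus(X_A)$ and an infinite itinerary point $q\in\bnd X_A$ that cannot be joined to $p$ inside $\bnd X_A$.

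Finally I would transfer this to $\bnd X$ exactly as in the proof of Theorem \ref{thm:notpathconn}. The walls of $X_A$, together with their $G$-translates, should separate the region of $X$ carrying $q$ from the nexus, so that by Lemma \ref{lem:separate} any path in $\bnd X$ from $p$ to $q$ must pass through the limit set of such a wall; since these limit sets lie in $\nexus(X_A)\subseteq\nexus(X)$, which is path connected, the path could be rerouted to lie entirely in $\bnd X_A$, contradicting the choice of $p,q$.

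The main obstacle is precisely this separation step. Unlike the genuine amalgam splittings used for the RAAG corollaries, $G$ does \emph{not} split with $X_A$ as a vertex space: every one-edge splitting of $G$ has a central $\Z$ in one factor (for instance $(G_-\times\Z)\ast_{\Z}(\Z\times\Z)\cong(G_-\ast\Z)\times\Z$), and such a factor has a suspension, hence path connected, boundary. Consequently I cannot invoke Theorem \ref{thm:notpathconn} as a black box and must instead check by hand that the additional $G_\pm$-directions attach only to the nexus and cannot furnish a new path to $q$. I expect this to be controlled, for the stated values $n=m=1$ and $n=1,m=2$, by the loneliness of infinite itinerary rays and by isolated-flats arguments in the spirit of \cite{Benzvi}, whereas for larger $n,m$ the relevant flats cease to be isolated and this control is lost, which is why only these cases are proved and the general statement is left as a conjecture.
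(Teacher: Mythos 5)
Your opening moves coincide with the paper's: pick infinite order elements $a\in G_-$, $d\in G_+$ (the paper cites \cite{SwensonCutPoint} for their existence) and observe that $\langle a,b,c,d\rangle\cong CK$ sits inside $G$. But the proposal then breaks on a claim that is false, and whose negation is precisely the paper's proof: you assert that $G$ does not split with $CK$ as a vertex group, and that Theorem \ref{thm:notpathconn} therefore cannot be invoked. In fact $G$ re-splits as $(G_-\times\Z)\ast_{\Z^2}CK\ast_{\Z^2}(\Z\times G_+)$, identifying $\langle a\rangle\times\Z\leq G_-\times\Z$ with the parabolic $\langle a,b\rangle\leq CK$ and $\Z\times\langle d\rangle\leq \Z\times G_+$ with $\langle c,d\rangle\leq CK$; comparing presentations, both graphs of groups present the same group, since the relations $[a,b]$ and $[c,d]$ are already consequences of the product relations in the outer factors. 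Your observation that every one-edge splitting of $G$ has a factor with central $\Z$ is true but irrelevant: what is needed, and what the paper uses throughout (compare Corollary \ref{cor:raagscor}), is a multi-edge graph of groups with $CK$ as a vertex group in which every edge group is a proper parabolic of $CK$, so that all the relevant wall limit sets lie in $\nexus(S_{CK})$. With this re-splitting in hand, the paper builds $X$ by the Equivariant Gluing Theorem \cite[II.11.18]{BH99}, so that the Salvetti complex of $CK$ is literally a vertex space separated from the rest of $X$ by edge spaces; the edge groups $\langle a,b\rangle$ and $\langle c,d\rangle$ are finite itinerary, and Theorem \ref{thm:notpathconn} applies as a black box (the second family is handled the same way, with a modified third vertex group and a larger proper parabolic edge group, where Lemma \ref{lem:containedinthenexus} supplies $\Lambda\subset\nexus$).

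Because you reject this route, the decisive step of your argument --- that any path in $\bnd X$ from $q$ back to the nexus must cross wall limit sets lying in $\nexus(X_A)$ --- is never proved: you explicitly defer it to ``loneliness of infinite itinerary rays'' and ``isolated-flats arguments in the spirit of \cite{Benzvi}.'' That is a genuine gap, and as sketched it is unlikely to close: $G_\pm$ are arbitrary CAT(0) groups, so the ambient space has no isolated-flats structure to exploit (in the paper, isolated flats enter only for proper parabolic subgroups of $CK$ itself, via \cite[Lemma 10.17]{Benzvi}, never for $X$), and a Mooney-style construction of $X$ hands you no subspace to feed into Lemma \ref{lem:separate} as a separator of the $CK$-region. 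Your closing diagnosis of why only these two families are treated is likewise off target: the constraint is not a loss of isolated flats for large $n,m$, but the algebraic fact that only in these cases can the relations involving the extra central generators be absorbed into the outer vertex groups so that $CK$ itself --- the only group whose block decomposition, nexus, and parabolic limit sets the paper has analyzed --- appears as a vertex group; for larger $n,m$ the analogous re-splitting produces strictly larger RAAGs in the middle, for which this analysis has not been carried out, and the statement is accordingly left as a conjecture.
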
 
	\begin{proof}
		Let $a$ be an infinite order element of $G_-$ and $d$ be an infinite order element of $G_+$. Such elements exist by work of Swenson \cite{SwensonCutPoint}.  
		
		We obtain a splitting of $(G_-\times \Z)\ast_{\Z}(\Z\times \Z)\ast_{\Z}(\Z\times G_+)$ as $$(G_-\times \Z)\ast_{\Z^2}(CK)\ast_{\Z^2}(\Z\times G_+)$$ identifying $\langle a\rangle \times \Z$ with $\langle a, b\rangle$ and identifying $\Z\times\langle d\rangle$ with $\langle c, d\rangle$. We can build a CAT(0) space $X$ on which this acts using the Equivariant Gluing Theorem \cite[II.11.18]{BH99}. As a subspace of this we have the Salvetti complex for $CK$ and the amalgamating subgroups are finite itinerary and so the hypothesis of Theorem \ref{thm:notpathconn} are satisfied. 
		
		For $(G_-\times \Z)\ast_{\Z}(\Z\times \Z^2)\ast_{\Z^2}(\Z^2\times G_+)$ we use the splitting $$(G_-\times \Z)\ast_{\Z^2}(CK)\ast_{\Z^2\ast\Z}(\Z\times (G_+\ast\Z)).$$ The proof from the first case follows verbatim for the second. 
	\end{proof}
	
	\section{Non-unique boundaries} \label{section: nonunique}
	
	In this section, we prove Theorem \ref{thm: nonhomeo}, which we restate for convenience.
	
	\begin{thm}\label{thm: nonhomeo}
		For each $n$, there is a group $G_n$ and $\cat(0)$ spaces $X_n$ and $Y_n$ admitting geometric group actions by $G_n$ with the following properties:
		\begin{itemize}
			\item $\bnd X_n$ and $\bnd Y_n$ are $n$-connected
			\item $\bnd X_n$ and $\bnd Y_n$ are not homeomorphic
		\end{itemize} 
	\end{thm}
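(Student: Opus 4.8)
The plan is to realize $G_n = CK\times\Z^{n+1}$ geometrically on two product spaces that differ only in the $CK$-factor. By the work of Croke--Kleiner \cite{CK00} and Wilson \cite{JWilson}, for two distinct angles $\theta\neq\theta'$ of intersection between the curves $b$ and $c$ one obtains \cat(0) spaces $W_\theta$ and $W_{\theta'}$, each admitting a geometric action of $CK$, with $\bnd W_\theta$ and $\bnd W_{\theta'}$ \emph{not} homeomorphic. I would set
\[
X_n = W_\theta\times\R^{n+1},\qquad Y_n = W_{\theta'}\times\R^{n+1},
\]
and let $G_n = CK\times\Z^{n+1}$ act by the product of the $CK$-action on the first factor with the translation action of $\Z^{n+1}$ on $\R^{n+1}$. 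Both factors are \cat(0) and both actions are geometric, so the product actions are geometric and $X_n,Y_n$ are \cat(0).

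Next I would compute the boundaries. For a product of \cat(0) spaces the visual boundary is the topological join, $\bnd(U\times V)=\bnd U\ast\bnd V$ \cite{BH99}, and $\bnd\R^{n+1}=S^n$. Hence
\[
\bnd X_n = \bnd W_\theta \ast S^n = \Sigma^{n+1}\bnd W_\theta,
\]
the $(n+1)$-fold (unreduced) suspension, and likewise for $Y_n$. Since $CK$ is infinite, $\bnd W_\theta$ is nonempty, so the standard connectivity estimate $\mathrm{conn}(A\ast B)\geq\mathrm{conn}(A)+\mathrm{conn}(B)+2$, together with $\mathrm{conn}(S^n)=n-1$, shows that $\bnd X_n$ and $\bnd Y_n$ are $n$-connected. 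This settles the first bullet.

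The genuinely hard point is the second bullet: I must show
\[
\Sigma^{n+1}\bnd W_\theta \;\not\cong\; \Sigma^{n+1}\bnd W_{\theta'},
\]
given only that $\bnd W_\theta\not\cong\bnd W_{\theta'}$. This is not automatic, since suspension is far from injective on homeomorphism type, as the double suspension theorem of Edwards and Cannon dramatically illustrates. The approach I would take is to recover the homeomorphism type of $\bnd W_\theta$ from its iterated suspension by a homeomorphism-invariant construction. Concretely, I would use local homology $H_\ast(W,W\setminus\{p\})$, which is a topological invariant of the pair: at a point lying in the product region a neighborhood is a product with $\R^{n+1}$, so its local homology is that of $\bnd W_\theta$ at the corresponding point shifted by $n+1$, whereas at a point of the suspension $S^n$ core a neighborhood is $\R^n\times\mathring C(\bnd W_\theta)$, so its local homology is instead governed by the \emph{global} reduced homology of $\bnd W_\theta$ (again shifted). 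Stratifying $\bnd X_n$ by local homology should locate the core $S^n$ and its complementary product region intrinsically, and thence let me reconstruct $\bnd W_\theta$ up to homeomorphism, contradicting \cite{CK00,JWilson}.

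The main obstacle is precisely ruling out a double-suspension type collapse: I must verify that no exotic homeomorphism mixes the core with the product region in a way that erases the distinction. Here I would exploit that the Croke--Kleiner boundaries are very far from being manifolds or homology spheres --- in particular they are not path connected --- so that the local-homological stratification is nondegenerate and the distinguishing invariant of \cite{CK00,JWilson} persists after crossing with $\R^{n+1}$ (equivalently, under suspension). Making this persistence precise --- importing the Croke--Kleiner/Wilson invariant and checking that it survives the join with $S^n$ --- is the crux of the argument; the construction and connectivity steps above are routine by comparison.
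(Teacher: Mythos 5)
Your construction, boundary computation, and connectivity argument coincide with the paper's: it also takes $G_n = CK\times\Z^{n+1}$ acting on $X_\alpha\times\R^{n+1}$ (with $X_\alpha$ the universal cover of the Leeb complex at angle $\alpha$), identifies the boundary as $\bnd X_\alpha\ast S^n$, and notes that this join is $n$-connected because $\bnd X_\alpha$ is nonempty. The divergence, and the gap, is in the second bullet. The paper never tries to deduce $\bnd X_\alpha\ast S^n\not\cong\bnd X_{\alpha'}\ast S^n$ abstractly from $\bnd X_\alpha\not\cong\bnd X_{\alpha'}$; instead it re-runs the Croke--Kleiner argument inside the join. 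It equips $X_\alpha\times\R^{n+1}$ with a block decomposition whose blocks are $T_4\times\R^{n+2}$ and whose walls are $\R^{n+3}$, redefines poles (now copies of $S^{n+1}$), vertices, safe paths (paths avoiding the join sphere and points with neighborhoods of the form $D^n\times C(\bnd X_\alpha)$), simplices $I\ast S^n$, and hemispheres $S^n\ast I$, and verifies the analogues of Lemmas 3--9 of \cite{CK00} so that any homeomorphism of boundaries must carry safe path components to safe path components, block boundaries to block boundaries, poles to poles, and hemispheres to hemispheres. The distinguishing invariant is then Croke--Kleiner's count: a hemisphere minus the closure of the adjacent poles has $2$ components when $\alpha=\pi/2$ and $3$ components otherwise.

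Your proposed route --- stratify by local homology, locate the core $S^n$, and ``reconstruct $\bnd W_\theta$ up to homeomorphism'' --- has a genuine gap at exactly the step you flag as the crux, and I do not think it can be repaired in the form stated. Even granting that local homology isolates the core sphere (plausible, since core points have local homology governed by the global reduced homology of $\bnd W_\theta$ rather than its local homology), removing the core leaves a space homeomorphic to $\bnd W_\theta\times\R^{n+1}$, and recovering the homeomorphism type of a compactum from its product with Euclidean space fails in general: this is the same phenomenon as the double-suspension collapse you cite (compare the Whitehead manifold or Bing's dogbone space, whose products with $\R$ are $\R^4$), and ``being far from a manifold or homology sphere'' does not by itself rule it out. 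More decisively, the Croke--Kleiner/Wilson invariant is not a local-homology invariant: it is a point-set invariant (component counts of hemispheres minus pole closures) whose very formulation requires first identifying blocks, poles, and hemispheres intrinsically inside the boundary, which Croke and Kleiner accomplish via the safe-path-component machinery. A local-homology stratification supplies none of that structure, so to ``import'' their invariant you would be forced to rebuild the entire combinatorial apparatus inside the join --- which is precisely what the paper's proof does. As written, your proposal establishes the first bullet but leaves the second unproved.
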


	It is known by \cite{CK00} that the boundary of a CAT(0) group is not well defined. However, none of the known boundaries of this group are path connected. In this section we give examples of CAT(0) groups with non-unique boundary in which all boundaries are path connected. Moreover, we give similar results where all CAT(0) boundaries are $n$-connected for arbitrary $n$. 
	
	We follow closely the proof given in \cite{CK00}. Let $X = X_{\alpha}$ be the universal cover of the Leeb complex where the geodesics in the central block meet at an angle of $\alpha$. This space is a classifying space for $CK$. The groups that we will study are of the form $G_n = CK\times\Z^{n+1}$. This group acts properly cocompactly on $Z = X\times\R^{n+1}$. The boundary of $X\times\R^{n+1}$ is $\partial X\ast S^n$. We will show that we can still differentiate these spaces after taking a topological join with $S^n$. Note that this boundary is $n$-connected as $\partial X$ is not empty and $S^n$ is $n-1$ connected. 
	
	The group $G_n$ has a splitting as $F_2\times\Z^{n+2}\ast_{\Z^{n+3}}F_2\times \Z^{n+2}$. The space $Z$ has a connected block decomposition where each block is isometric to $T_4\times \R^{n+2}$ and the walls are each isometric to $\R^{n+3}$.	Since each block $B$ is isometric to the product of a tree and $\R^{n+2}$ we see that $\partial B = \mathcal{C}\ast S^{n+1}$ where $\mathcal{C}$ is a Cantor set. A \textit{pole} of $B$ is any point of $S^{n+1}$.

	\begin{lem}\cite[Lemma 3]{CK00}
		
		If $B_1, B_2$ are blocks, then one of the following holds:
		\begin{enumerate}
			\item $\partial B_1\cap \partial B_2 = \emptyset$.
			\item $B_1\cap B_2$ is a wall $W$ and $\partial B_1\cap \partial B_2 = \partial W$. 
			\item There is a block $B$ such that $B\cap B_i = W_i$ and $\partial B_1\cap\partial B_2$ is the set of poles of $B$. 
		\end{enumerate}
	\end{lem}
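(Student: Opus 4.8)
The plan is to follow Croke and Kleiner, organizing everything by the combinatorial distance $k=d_{\T}(v_1,v_2)$ between the vertices $v_1,v_2$ of the nerve $\T$ corresponding to $B_1,B_2$. The three alternatives are mutually exclusive, so it suffices to prove that $k=1$ forces (2), that $k=2$ forces (3), and that $k\geq 3$ forces (1); the degenerate case $k=0$ is $B_1=B_2$, which is excluded. Write $B_1=D_0,D_1,\dots,D_k=B_2$ for the blocks along the geodesic in $\T$ and $E_i=D_{i-1}\cap D_i\in\W$ for the walls it crosses.

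The engine of the argument is a separation estimate giving $\partial B_1\cap\partial B_2\subseteq\bigcap_{i=1}^k\partial E_i$. Since $\T$ is a tree, deleting the edge $E_i$ disconnects it, so $E_i$ separates $B_1$ from $B_2$ in $X$ (by the separation property of walls for adjacent blocks, propagated along the tree). Now fix $\xi\in\partial B_1\cap\partial B_2$. By convexity of the blocks there are geodesic rays $r_1\subset B_1$ and $r_2\subset B_2$ asymptotic to $\xi$, and convexity of the metric makes $t\mapsto d(r_1(t),r_2(t))$ bounded and hence non-increasing, so it is at most $D:=d(r_1(0),r_2(0))$. As $E_i$ separates, each segment $[r_1(t),r_2(t)]$ meets $E_i$ in a point $p_t$ with $d(r_1(t),p_t)\leq D$. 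Letting $t\to\infty$, the points $p_t$ run to infinity within bounded distance of $r_1(t)\to\xi$, so $p_t\to\xi$ in the cone topology; since $E_i\cup\partial E_i$ is compact this gives $\xi\in\partial E_i$, as claimed. (This is the single-point analogue of Lemma~\ref{lem:separate}, and could alternatively be phrased through it.)

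It remains to compute wall-boundary intersections using the product structure $B=T_4\times\R^{n+2}$, in which a wall is $W=\lambda\times\R^{n+2}$ for a line $\lambda\subset T_4$, with $\partial W=\partial\lambda\ast S^{n+1}$ and poles $S^{n+1}=\partial\R^{n+2}$. The decisive fact is that two distinct walls $W,W'$ of one block meet on the boundary exactly in the poles of that block: a shared non-pole direction would force a common endpoint of $\lambda$ and $\lambda'$, but distinct walls of a block are distinct cosets of the axis of the amalgamating generator inside the free factor, and distinct conjugate axes in the tree of a free group have disjoint endpoint pairs (a common end would put the translating element in the cyclic stabilizer of that end, i.e.\ give the same coset). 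This settles $k=1$, where $\partial B_1\cap\partial B_2\subseteq\partial E_1=\partial W$ with the reverse inclusion $\partial W\subseteq\partial B_i$ free from convexity, yielding (2); and $k=2$, where $E_1,E_2$ are distinct walls of $B=D_1$, so $\partial E_1\cap\partial E_2$ is the poles of $B$, again with the easy reverse inclusion, yielding (3). For $k\geq 3$ one intersects three consecutive wall-boundaries: $\partial E_1\cap\partial E_2$ is the poles of $D_1$ and $\partial E_2\cap\partial E_3$ is the poles of $D_2$, while the (block-specific) central pole directions of the adjacent blocks $D_1,D_2$ are two non-parallel lines spanning the common flat wall $E_2$, hence have disjoint boundary points; so the triple intersection is empty and we obtain (1). (In the ambient product $Z=X\times\R^{n+1}$ the ubiquitous sphere $\partial\R^{n+1}$ is joined onto every block boundary, so there case~(1) reads $\partial B_1\cap\partial B_2=S^n$ rather than $\emptyset$; the combinatorial trichotomy is unaffected.)

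The main obstacle I anticipate is this last computation rather than the separation step: one must verify carefully, via the join structure of $\partial B=\mathcal C\ast S^{n+1}$ and $\partial W=\partial\lambda\ast S^{n+1}$, that wall-boundaries of a common block meet only in poles, and that the central poles of adjacent blocks are genuinely distinct. Both reduce to concrete facts, namely that distinct conjugate axes in a free group's tree have disjoint ends and that the two central $\Z$-directions are non-parallel lines in the single flat wall between adjacent blocks, so the real work is the bookkeeping of the product and join coordinates; the asymptotic-ray argument is then routine.
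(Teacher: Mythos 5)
Your proof is correct in substance, but it takes a much longer, self-contained route than the paper: the paper's entire proof is the one-line observation that the blocks of $Z$ are products of the Croke--Kleiner blocks with $\R^{n+1}$, so that block boundaries are joins $\partial B'\ast S^n$ and the trichotomy of \cite[Lemma 3]{CK00} transfers under joining with $S^n$. You instead reconstruct the Croke--Kleiner argument itself: the nerve-distance trichotomy, the separation/asymptotic-ray estimate (correctly identified as a pointwise version of Lemma \ref{lem:separate}), and the tree-coset computation showing that boundaries of distinct walls of a block meet exactly in its poles. Both routes are valid, and yours buys one real advantage: it forces the observation, which you make in your closing parenthetical, that case (1) as literally stated never occurs in $Z$, because the sphere $S^n=\partial\R^{n+1}$ of the common Euclidean factor lies in every block boundary, so blocks at nerve-distance at least $3$ satisfy $\partial B_1\cap\partial B_2=S^n$, not $\emptyset$. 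The paper's statement keeps the $\emptyset$ from \cite{CK00} and its product-observation proof does not flag this discrepancy, although later parts of the paper implicitly accommodate it (``the union of the block boundaries without $S^n$'', and poles ``which do not come from $S^n$'' being the vertices). One criticism of your write-up: the body of your $k\geq 3$ argument asserts that the poles of the adjacent blocks $D_1,D_2$ have empty intersection, which is true in $X$ but false in $Z$, where they are the spheres $\partial(\ell_1\times\R^{n+1})$ and $\partial(\ell_2\times\R^{n+1})$ meeting in exactly $S^n$; it would be cleaner to run that computation in $Z$ and obtain $S^n$ directly, rather than proving the $X$-statement and patching it with the final parenthetical.
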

	\begin{proof}
		The blocks in our decomposition are a product of the blocks from \cite{CK00} and $\R^{n+1}$. 
	\end{proof}
	
	\begin{lem}\label{lem:nbhdsinjoins}
		Let $z$ be a point of $X\ast Y = X\times Y\times[0, 1]/\sim$. Suppose $z = (x, y, t)$. Then a neighbourhood $N(z)$ of $z$ in $X\ast Y$ is homeomorphic to $$N(z) = 
		\begin{cases}
		N(x)\times C(Y) & \text{if } t = 0,\\
		N(y)\times C(X) & \text{if } t = 1,\\
		N(x)\times N(y)\times (t-\epsilon, t+\epsilon) & \text{if } 0<t<1.
		\end{cases}$$ Where $C(X)$ is the open cone on $X$. 
	\end{lem}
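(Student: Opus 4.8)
The plan is to analyze the quotient map $q\colon X\times Y\times[0,1]\to X\ast Y$ directly and to compute the local structure of $X\ast Y$ by finding, in each of the three cases, a \emph{saturated} open neighbourhood of the fiber $q^{-1}(z)$ on which the restriction of $q$ is a quotient map onto a neighbourhood of $z$. Recall that the defining relation $\sim$ collapses $\{x\}\times Y\times\{0\}$ to a point for each fixed $x$, collapses $X\times\{y\}\times\{1\}$ to a point for each fixed $y$, and is trivial for $0<t<1$. Thus $q^{-1}(z)$ equals $\{x\}\times Y\times\{0\}$ when $t=0$, equals $X\times\{y\}\times\{1\}$ when $t=1$, and is the single point $\{(x,y,t)\}$ when $0<t<1$.

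The case $0<t<1$ is the easy one. First I would observe that $X\times Y\times(0,1)$ is an open saturated subset on which $\sim$ is trivial, so $q$ restricts to a continuous bijection onto the open set $q(X\times Y\times(0,1))$; since the restriction of a quotient map to a saturated open set is again a quotient map, this restriction is a homeomorphism. Consequently a basic neighbourhood $N(x)\times N(y)\times(t-\epsilon,t+\epsilon)$ of $(x,y,t)$ maps homeomorphically to a neighbourhood of $z$, which gives the product description.

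For $t=0$ I would take the saturated open neighbourhood $U=N(x)\times Y\times[0,\epsilon)$ of the fiber $q^{-1}(z)=\{x\}\times Y\times\{0\}$. It is saturated because for $t>0$ the fibers are points while at $t=0$ it contains the full fibers $\{x'\}\times Y\times\{0\}$ for every $x'\in N(x)$, and it is open because $[0,\epsilon)$ is open in $[0,1]$. Restricting $q$, the only identifications occurring inside $U$ collapse $\{x'\}\times Y\times\{0\}$ to a point for each $x'$, so the induced relation on $U=N(x)\times\bigl(Y\times[0,\epsilon)\bigr)$ is exactly $\mathrm{id}_{N(x)}$ times the relation collapsing $Y\times\{0\}$. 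After rescaling $[0,\epsilon)\cong[0,1)$, the second factor becomes the open cone $C(Y)$. The case $t=1$ is identical with the roles of $X$ and $Y$ exchanged, yielding $N(y)\times C(X)$.

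The main obstacle is topological rather than set-theoretic: the set-level identifications above are immediate, but to conclude that $q(U)$ is \emph{homeomorphic} to $N(x)\times C(Y)$, and not merely in bijection with it, I must know that quotienting commutes with taking the product with the fixed factor $N(x)$. This is precisely the assertion that $\mathrm{id}_{N(x)}\times r$ is a quotient map when $r\colon Y\times[0,\epsilon)\to C(Y)$ is the collapsing quotient, which holds by Whitehead's theorem provided $N(x)$ is locally compact Hausdorff. In the setting of the lemma the relevant $X$ and $Y$ are compact metric spaces (the CAT(0) boundary $\bnd X$ and a sphere $S^n$), so $N(x)$ may be chosen to be an open subset of a locally compact Hausdorff space and is therefore itself locally compact; this supplies the needed hypothesis and completes the identification.
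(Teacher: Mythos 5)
Your proposal is correct, and it is in fact substantially more complete than the paper's own argument. The paper disposes of this lemma in one sentence: it observes that the preimage under the quotient map of each candidate set is open, hence each is open in $X\ast Y$ --- in other words, it only verifies that these sets are neighbourhoods, and treats the identification of their topology with the stated products as self-evident. Your proof follows the same basic strategy (direct analysis of the quotient map $q$), but you isolate and resolve the one genuinely non-trivial point that the paper glosses over: a set-level bijection $N(x)\times C(Y)\to q(U)$ obtained by collapsing one factor of a product is automatically continuous in one direction only, and to get a homeomorphism you need $\mathrm{id}_{N(x)}\times r$ to be a quotient map, which fails for general topological spaces. Your appeal to Whitehead's theorem, together with the observation that in the paper's application $X$ and $Y$ are $\bnd X$ and $S^n$ (compact metrizable, so $N(x)$ may be taken locally compact Hausdorff), closes this gap; alternatively, compactness of $Y$ makes the cone quotient $r$ a perfect map, and perfect maps stay closed (hence quotient) after crossing with an identity. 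The saturated-open-restriction argument for the case $0<t<1$ and for cutting down to $N(x)\times Y\times[0,\epsilon)$ is also carried out correctly. In short, where the paper offers an openness check, you supply the actual homeomorphism, at the modest cost of needing a hypothesis (local compactness) that the lemma as stated does not announce but that its intended application satisfies.
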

	\begin{proof}
		The preimage of each the above sets under the quotient map is open thus they are open sets in the quotient. 
	\end{proof}
	
	\begin{lem}\cite[Lemma 4]{CK00}
		
		Suppose that $\lambda\in \partial B$  and $\lambda$ is not the pole of any block other than $B$. Then the path component of $\lambda$ in a suitable neighbourhood $\Lambda$ of $\lambda$ is contained in $\partial B$. 
	\end{lem}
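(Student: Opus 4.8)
The plan is to analyze the local structure of the join boundary $\bnd Z = \bnd X \ast S^n$ near a point $\lambda \in \bnd B$ by combining the three-case trichotomy of the previous lemma with the neighborhood description in Lemma \ref{lem:nbhdsinjoins}. Recall that $\bnd B = \mathcal{C} \ast S^{n+1}$, so $\lambda$ sits in a block boundary that is itself a join. The hypothesis that $\lambda$ is not a pole of any block other than $B$ is precisely what excludes case (3) of the trichotomy from contributing new path components through $\lambda$: if $\lambda$ were a pole of some other block $B'$, paths could escape $\bnd B$ by travelling into $\bnd B'$ along the pole set.

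First I would fix a suitable neighborhood $\Lambda$ of $\lambda$ in $\bnd Z$ that is small enough to see only blocks $B'$ whose boundary actually meets $\Lambda$, using the $\epsilon$-condition of the block decomposition to guarantee that only finitely many relevant walls and blocks come into play near $\lambda$. For each such block $B'$ other than $B$, I would invoke the trichotomy: case (1) contributes nothing since $\bnd B' \cap \bnd B = \emptyset$; case (2) forces $\bnd B \cap \bnd B' = \bnd W$ for a wall $W$, so any path leaving $\bnd B$ into $\bnd B'$ must cross $\bnd W$, which lies in $\bnd B$ and hence does not leave $\bnd B$; case (3) is ruled out by the non-pole hypothesis on $\lambda$. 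The upshot is that locally any block boundary meeting $\Lambda$ does so along a wall boundary that is contained in $\bnd B$.

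Next I would translate this into a statement about path components. The key point is that a path starting at $\lambda$ and staying in $\Lambda$ can only pass from $\bnd B$ into some $\bnd B'$ through the shared wall boundary $\bnd W \subset \bnd B$; since separating walls cut the space, crossing back and forth does not allow the path component of $\lambda$ to acquire points outside $\bnd B$. Concretely, I would argue that $\Lambda \setminus \bnd B$ decomposes into pieces separated from $\lambda$ by the wall boundaries, so that the path component of $\lambda$ in $\Lambda$ must remain inside $\bnd B$. The join structure enters only through Lemma \ref{lem:nbhdsinjoins}: taking the join with $S^n$ replaces each local factor by a product with $N(\text{point of }S^n)$ or an open cone, and coning and producting with a connected factor do not merge distinct path components at the $\bnd X$ level, so the conclusion for $\bnd X$ upgrades to $\bnd Z$ without change.

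The main obstacle I anticipate is handling case (2) carefully when infinitely many blocks accumulate near $\lambda$, or when $\lambda$ lies in the $S^{n+1}$ join factor of $\bnd B$ rather than in the Cantor factor, since then $\lambda$ is a pole of $B$ itself and one must check that being a pole of $B$ (but no other block) is consistent with the hypothesis and does not secretly open an escape route. I would address this by first treating $\lambda$ in the Cantor part, where the wall boundaries genuinely separate, and then observing that poles of $B$ other than along walls are surrounded locally only by $\bnd B$ by the trichotomy, so the non-pole-of-other-blocks hypothesis gives exactly the local containment needed. The delicate bookkeeping of which walls bound $\Lambda$ is where I expect the real work to lie, but the $\epsilon$-condition keeps this finite and tractable.
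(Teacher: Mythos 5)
Your proposal misses the one observation that the paper's proof actually turns on. Write $\bnd Z=\bnd X\ast S^n$ with join coordinate $t$ as in Lemma \ref{lem:nbhdsinjoins}, and recall that the pole set of a block is the sphere $S^{n+1}=S^0\ast S^n$, where $S^n$ is the common join factor coming from the $\R^{n+1}$ direction. Hence every point with $t=1$ is a pole of \emph{every} block, so the hypothesis on $\lambda$ forces $t\neq 1$; this is the whole content of the paper's second sentence. The exclusion is essential, not a technicality: when $t=1$ the neighbourhood supplied by Lemma \ref{lem:nbhdsinjoins} is $N(y)\times C(\bnd X)$, and an open cone is path connected, so the path component of such a point meets cone directions over all of $\bnd X$ and is certainly not contained in $\bnd B$. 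Your claim that ``coning and producting with a connected factor do not merge distinct path components at the $\bnd X$ level'' is therefore false in exactly the case it would be invoked for; it is true only of the factor $C(S^n)\cong D^{n+1}$ that appears when $t\neq 1$. Once $t\neq1$ is secured, the neighbourhood is $N(x)\times D^{n+1}$, its path components are products, and one simply quotes the proof of \cite[Lemma 4]{CK00} for the $N(x)$ factor (noting that the hypothesis passes to the $\bnd X$-coordinate: if $x$ were a pole of another block of $X$, then $\lambda$ would be a pole of the corresponding block of $Z$). That three-line reduction is the paper's entire proof; your proposal never isolates the $t=1$ case correctly.

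The portions of your argument that try to re-derive the Croke--Kleiner lemma inside $\bnd Z$ also do not work as sketched. The $\epsilon$-condition gives no local finiteness in the boundary: the poles of the blocks adjacent to $B$ are ends of wall axes and are dense in the Cantor factor of $\bnd B$, and every wall boundary of $B$ contains the poles of $B$, so every neighbourhood of every point of $\bnd B$ meets infinitely many wall and block boundaries. For the same reason your assertion that poles of $B$ are ``surrounded locally only by $\bnd B$'' is false: by cases (2) and (3) of the trichotomy, the poles of $B$ lie in $\bnd B'$ for every block $B'$ adjacent to $B$. Finally, your wall-crossing argument only constrains paths that travel from one block boundary to another; it says nothing about paths exiting $\bigcup_{B'}\bnd B'$ into points with infinite itinerary, and ruling those out is the substantive difficulty in \cite[Lemma 4]{CK00}, which you would therefore need to import wholesale anyway. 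The paper avoids all of these issues precisely by not re-proving the lemma: it reduces the statement in $\bnd Z$ to the already established statement in $\bnd X$.
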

	\begin{proof}
		Since the space in question is $\partial X\ast S^n$ we can understand neighbourhoods by \ref{lem:nbhdsinjoins}. Since $\lambda$ is not a pole of any other block we must be in the case $t\neq 1$. Thus the neighbourhood is homeomorphic to $N(x)\times D^{n+1}$ and the proof from \cite{CK00} follows. 
	\end{proof}
	
	We say that $\lambda$ is a {\em vertex} if there is a neighbourhood $U$ of $\lambda$ such that the path component of $\lambda$ in $U$ is homeomorphic to $D^{n}\times C(\mathcal{C})$. By the previous lemma, poles which do not come from $S^n$ are vertices. 
	
	We say that a path is {\em safe} if it only goes through finitely many vertices and no points with a neighbourhood homeomorphic to $D^n\times C(\partial X)$. We say a path component is {\em safe} if there is a safe path between any pair of points in the path component. With this definition of safe path and vertex we recover the results of \cite[Section 7]{CK00}. 
	
	\begin{lem}\label{lem:unionofblocksissafe}\cite[Lemma 6]{CK00}
		
		The union of the block boundaries without $S^n$ is a safe path component. 
	\end{lem}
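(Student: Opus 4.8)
The plan is to follow the argument of \cite[Lemma 6]{CK00} essentially verbatim, using Lemma \ref{lem:nbhdsinjoins} to handle the two features that are new in the product setting: the global join factor $S^n$ and the fact that poles have been upgraded to $D^n\times C(\mathcal{C})$ vertices. Write $U$ for the union of the block boundaries with $S^n$ removed. Since $\partial B=\mathcal{C}\ast S^{n+1}$ and $S^{n+1}=S^0\ast S^n$, the set $U$ is precisely the union of the suspensions of Cantor sets $\mathcal{C}\ast S^0$, one for each block, sitting inside $\partial X\subset \partial X\ast S^n$; this is the nexus of the block decomposition of $X$ embedded in $\partial Z$. Being a \emph{safe} path component here means being a maximal set any two points of which are joined by a safe path (this is the correct analogue of the path components of \cite[Section 7]{CK00}, since the join $\partial X\ast S^n$ is itself globally path connected). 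Two things must be shown: (a) any two points of $U$ are joined by a safe path, and (b) no safe path issuing from $U$ leaves $U$.

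For (a), I would argue blockwise along the nerve tree $\T$. Each block boundary minus $S^n$ is a suspension of a Cantor set, hence path connected, and a safe path can move within it while meeting only its two suspension poles, which are vertices since poles not coming from $S^n$ are vertices. The union of all such block boundaries is path connected because $X$ has a connected block decomposition. Concretely, given $x,y\in U$ in block boundaries $\partial B_x,\partial B_y$, the tree $\T$ gives a finite edge-path of blocks $B_x=B_0,B_1,\dots,B_k=B_y$; by the trichotomy \cite[Lemma 3]{CK00}, consecutive block boundaries meet in a wall boundary $\partial W$ or in the poles of an intermediate block, and in either case one passes between $\partial B_i$ and $\partial B_{i+1}$ through a point of $U$ while crossing only finitely many poles. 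Concatenating yields a path that stays in $U$, never meets a point of $S^n$ (whose neighbourhoods are $D^n\times C(\partial X)$ by Lemma \ref{lem:nbhdsinjoins}, hence are exactly the points forbidden to safe paths), and passes through finitely many vertices; such a path is safe.

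For (b), the point is that escaping $U$ is incompatible with safety. The complement of $U$ in $\partial X\ast S^n$ consists of the points of $S^n$, which are forbidden outright, together with the infinite-itinerary points of $\partial X$, which lie in no single block boundary. As in \cite[Section 7]{CK00}, reaching an infinite-itinerary point from $U$ forces the path to enter infinitely many distinct blocks and hence to cross infinitely many poles, i.e.\ infinitely many vertices, contradicting safety. Thus every safe path from $U$ remains in $U$, and with (a) this identifies $U$ as a safe path component. I expect step (b) to be the main obstacle: one must control exactly which points are adjacent to $U$ and verify that every route out of the nexus, whether through a wall boundary or through shared poles, either re-enters $U$ or accumulates infinitely many vertex-crossings---this is precisely the phenomenon that the notion of safe path was designed to capture, and it is where the neighbourhood analysis of Lemma \ref{lem:nbhdsinjoins} and the block-intersection trichotomy are indispensable.
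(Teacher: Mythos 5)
Your overall plan---adapt \cite[Lemma 6]{CK00} essentially verbatim, with Lemma \ref{lem:nbhdsinjoins} absorbing the join factor---is exactly what the paper intends: the paper gives no proof of this lemma at all, only the citation, relying on the preceding remark that ``with this definition of safe path and vertex we recover the results of \cite[Section 7]{CK00}.'' The problems are in your execution, and there are two genuine ones.

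First, you misidentify the set the lemma is about. Since $\partial B=(\mathcal{C}\ast S^0)\ast S^n$, deleting $S^n$ does \emph{not} leave the suspension $\mathcal{C}\ast S^0$: it leaves $(\mathcal{C}\ast S^0)\times C(S^n)$, the suspension times an open cone on $S^n$. So $U$ is not the nexus of $X$ sitting inside $\partial X$; it is $\bigl(\bigcup_{B}\partial B\bigr)\setminus S^n=\bigl(\bigcup_{B_X}\partial B_X\bigr)\ast S^n\setminus S^n$, which contains every join segment from a block-boundary point of $\partial X$ toward $S^n$, minus its endpoint on $S^n$. Dually, the complement of $U$ is not ``$S^n$ together with the infinite-itinerary points of $\partial X$'' but $S^n$ together with \emph{all} points $(x,s,t)$, $t<1$, whose shadow $x\in\partial X$ has infinite itinerary. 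As written, both your step (a) and your step (b) treat only the $t=0$ slice; the points with $0<t<1$, which genuinely lie in $U$ (respectively in its complement), are never addressed. The repair is not hard---such a point has the same itinerary as its shadow, and one can cone down to $t=0$---but it brings a new wrinkle your sketch must then confront: the entire join arc from a pole of a block to $S^n$ consists of vertices, so a safe path must cross such arcs transversally rather than run along them, and step (a) has to be arranged accordingly.

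Second, and more seriously, the key inference in your step (b) is invalid: ``enter infinitely many distinct blocks and hence cross infinitely many poles'' does not follow. Adjacent block boundaries meet along wall boundaries $\partial W\cong S^{n+2}$, and the non-pole points of $\partial W$ are not vertices; a path can therefore pass from one block boundary to the next, through arbitrarily many blocks, without meeting a single vertex or forbidden point. This is precisely why \cite{CK00} needs its itinerary machinery rather than a block-counting argument: the actual mechanism is that, for a safe path (which avoids $S^n$ and meets only finitely many vertices), the paper's version of \cite[Lemma 4]{CK00} forces the set of times spent in any fixed block boundary to be relatively open as well as closed between consecutive vertex crossings, so that on each such interval the path is trapped either in a single block boundary or entirely in the infinite-itinerary set, and it is the itinerary analysis of \cite[Section 7]{CK00} (the Lemma 7 analogue and its supporting lemmas) that excludes the second alternative abutting the first. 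Your closing remark that this is ``the phenomenon safe paths were designed to capture'' points at the right tools, but the mechanism you actually wrote down is not the one that makes the lemma true, so step (b) remains unproved.
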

	
	\begin{lem}\cite[Lemma 7]{CK00}
		
		Let $c\colon [0, 1]\to \partial Z$ be a path and suppose that $c(0)$ has an infinite itinerary. Then either $c(t)$ has the same itinerary for all $t$ or there is a point with finite itinerary. 
	\end{lem}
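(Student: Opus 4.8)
The plan is to prove the contrapositive: assuming that no point of the path has finite itinerary, I will show that every $c(t)$ carries the same itinerary. Fix the basepoint $x_0$ used to define itineraries, chosen off every wall, so that $x_0$ lies in a unique block $B_0$ and every geodesic ray from $x_0$ begins in $B_0$. Suppose toward a contradiction that some $c(t_1)$ has an infinite itinerary different from that of $c(0)$. Since both itineraries are infinite geodesic rays in the tree $\T$ issuing from the vertex $B_0$, they agree along a common initial segment $B_0, \dots, B_k$ and then diverge: the itinerary of $c(0)$ continues into a block $B_{k+1}$, while that of $c(t_1)$ continues into a distinct block $B'_{k+1}$, both adjacent to $B_k$ in $\T$.

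The key step is to produce a single wall that separates the two endpoints. Let $W = B_k \cap B_{k+1}$ be the wall corresponding to the edge joining $B_k$ and $B_{k+1}$. Because $\T$ is a tree, deleting this edge disconnects it, and correspondingly $W$ separates $Z$ into the half containing $B_{k+1}$ and everything in the subtree beyond it, and the complementary half containing $B_k$, $x_0$, and the entire $B'_{k+1}$ direction. The ray representing $c(0)$ eventually passes into $B_{k+1}$, so $c(0)$ is a limit point of the far side; the ray representing $c(t_1)$ stays on the near side, as does $x_0$. Thus any geodesic in $Z$ from the far side to the near side meets $W$, and Lemma \ref{lem:separate} applies to give that $c$, which runs from $c(0)$ to $c(t_1)$ in $\partial Z$, must pass through $\Lambda(W) = \partial W$.

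Finally, since $W$ is a wall contained in the block $B_k$, we have $\partial W \subseteq \partial B_k$, so every point of $\partial W$ has finite itinerary; hence $c$ meets a point of finite itinerary, contradicting our assumption and completing the contrapositive. I expect the main obstacle to be the middle step: one must verify carefully that two distinct infinite itineraries genuinely force a \emph{single} wall to separate the corresponding endpoints of $c$ in $Z$, so that Lemma \ref{lem:separate} is applicable, tracking precisely which side of $W$ each limiting ray and the basepoint lie on. Granting this, the inclusion $\partial W \subseteq \partial B_k$, and hence the finiteness of the itinerary on $\partial W$, is immediate from the definitions. A minor point to pin down is the basepoint-independence of the phrase ``same itinerary,'' which is handled by the earlier remark that finiteness of itineraries is basepoint-independent together with the map $p\colon \partial Z \to \T \cup \partial \T$ recording the end of $\T$ determined by an infinite itinerary.
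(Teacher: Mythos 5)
Your proof is correct in substance, but it takes a genuinely different route from the paper's. The paper's own proof is a two-line reduction exploiting the product structure: since $Z = X\times\R^{n+1}$, the boundary is the join $\partial Z = \partial X \ast S^n$; every point of the $S^n$ factor lies in every block boundary and so has finite itinerary, while a path avoiding $S^n$ is handled by quoting Lemma 7 of \cite{CK00} for the $\partial X$ part. Your argument never uses the join structure: you work intrinsically with the block decomposition, observe that two distinct infinite itineraries are geodesic rays in the nerve tree $\T$ diverging across some edge, and use the wall $W$ carried by that edge together with the paper's Lemma \ref{lem:separate} to force the path to cross $\Lambda(W)=\partial W\subseteq \partial B_k$, a set of finite-itinerary points. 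What your approach buys is generality and self-containedness: it proves the statement for any CAT(0) space with such a block decomposition, not just for $X_{\alpha}\times\R^{n+1}$, and it reuses machinery already in the paper rather than citing \cite{CK00} as a black box; what the paper's approach buys is brevity. The step you rightly flag as the main obstacle is real but standard, and fillable: because the nerve is a tree, the unions of blocks over the two complementary subtrees intersect exactly in $W$, and convexity of blocks and walls (the set of times a geodesic spends in a convex set is an interval) pins the ray for $c(0)$ to the far side after it enters $B_{k+1}$ and the ray for $c(t_1)$ to the near side for all time; these are precisely the itinerary facts developed in \cite{CK00} and \cite{Moo10}, which the paper cites. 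One technical adjustment makes Lemma \ref{lem:separate} apply cleanly without worrying about closedness of infinite unions of blocks: take $A$ to be the closed sub-ray of the ray representing $c(0)$ from the time it enters $B_{k+1}$ onward, and $B$ the full ray representing $c(t_1)$, so that $A$ and $B$ are closed, $\Lambda(A)=\{c(0)\}$, $\Lambda(B)=\{c(t_1)\}$, and every geodesic from $A$ to $B$ crosses $W$.
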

	\begin{proof}
		If the path avoids the $S^n$ in the decomposition $\partial Z = \partial X\ast S^n$, then the proof follows from \cite[Lemma 7]{CK00}. On the other hand every point on $S^n$ has finite itinerary. 
	\end{proof}
	
	\begin{corollary}\cite[Corollary 8]{CK00}
		
		There is a unique safe component of $\partial Z$ which is dense, namely the space describe in Lemma \ref{lem:unionofblocksissafe}. 
	\end{corollary}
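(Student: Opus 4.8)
The plan is to read the result off the two preceding lemmas. By Lemma~\ref{lem:unionofblocksissafe} the union $\nu$ of the block boundaries (with the common $S^n$ deleted) is already a safe path component, so two things remain: that $\nu$ is dense in $\partial Z$, and that no safe component other than $\nu$ is dense.

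For density I would first show that the points of finite itinerary are dense in $\partial X$. Recall that $\alpha$ has finite itinerary exactly when $\alpha\in\partial B$ for some block $B$, so the set of finite-itinerary points is precisely $\bigcup_B\partial B$. Given $\xi$ with infinite itinerary $B_1,B_2,\dots$ and a basic cone neighbourhood $U(c,r,D)$, I would pick a block $B_i$ that $c$ enters only after radius $r$; the geodesic ray coinciding with $c$ up to that point and then remaining in $B_i$ to end on $\partial B_i$ agrees with $c$ at radius $r$, hence lies in $U(c,r,D)$ and has finite itinerary. Thus every cone neighbourhood meets $\bigcup_B\partial B$. Passing to the join $\partial Z=\partial X\ast S^n$, the only points deleted to form $\nu$ lie on the factor $S^n$, and each such point is a limit of interior join points of the surrounding block boundaries; so removing $S^n$ does not affect density and $\nu$ is dense in $\partial Z$.

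For uniqueness, let $C$ be a safe component with $C\neq\nu$. Safe components are disjoint, $\nu$ is exactly $\bigcup_B\partial B$ minus $S^n$, and a safe path never meets $S^n$ by definition. Hence $C$ meets no block boundary at all, and since every finite-itinerary point lies on some $\partial B$, every point of $C$ has infinite itinerary. Now take $\xi,\eta\in C$ joined by a safe path $c$; every point of $c$ again lies in $C$ and so has infinite itinerary, whence by the preceding lemma $c$ carries a single constant itinerary and $\xi,\eta$ share it. Therefore all of $C$ has one common infinite itinerary $\iota=B_1,B_2,\dots$.

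Finally I would show the set of points with itinerary $\iota$ is not dense, contradicting the density of $C$. Choosing a point whose itinerary begins with a block $B_1^{*}\neq B_1$ and a cone neighbourhood $U(c',r',D')$ with $r'$ large enough that every ray it contains has already entered $B_1^{*}$, no ray in $U(c',r',D')$ can have itinerary beginning with $B_1$, so this open set misses every $\iota$-point. Hence $C$ is not dense, and $\nu$ is the unique dense safe component. The only genuinely delicate point is the density bookkeeping across the join — verifying that crossing with $\R^{n+1}$ and deleting the shared $S^n$ leaves the itinerary structure of \cite{CK00} intact — while the uniqueness half is a formal consequence of the itinerary lemma.
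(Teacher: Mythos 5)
Your outline is in fact the Croke--Kleiner outline that the paper is silently invoking: the paper offers no proof of this corollary at all, deferring entirely to \cite[Corollary 8]{CK00} once the two preceding lemmas are in place, and your skeleton (density of finite-itinerary points; the itinerary lemma forcing any other safe component to carry one fixed infinite itinerary; non-density of a fixed-itinerary set) is that proof's skeleton. Your uniqueness reduction is correct as written, and the join bookkeeping with $S^n$ is fine. The problem is that both density claims --- which are the actual mathematical content --- rest on unjustified steps, and these are precisely the points where the specific geometry of the blocks must be used. The central gap is the phrase ``the geodesic ray coinciding with $c$ up to that point and then remaining in $B_i$.'' Such a ray is not automatic: concatenating $c|_{[0,T]}$ with a ray inside $B_i$ is a geodesic only if the Alexandrov angle at the junction is exactly $\pi$, and in any non-branching $\CAT(0)$ space (e.g.\ a Riemannian one) a redirected ray of this kind cannot exist, since two geodesic rays from $x_0$ sharing an initial segment must coincide. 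The step is rescued here only because blocks are $T_4\times\R^{n+2}$ and walls are $(\text{line})\times\R^{n+2}$: the space of directions at a point of a tree is discrete with all pairwise angles $\pi$, so at any wall-crossing point there is a direction \emph{inside the wall} making angle exactly $\pi$ with the incoming direction of $c$, and only then is the concatenation a geodesic ray ending in $\partial W\subset\partial B_i$. This argument (or some substitute for it) has to appear; as stated, your density claim is an assertion.

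The non-density step has the same defect plus a slip. Every itinerary begins with the same block, namely the one containing the basepoint $x_0$, so ``a point whose itinerary begins with a block $B_1^*\neq B_1$'' does not exist; what you want is a point whose itinerary leaves the path $\iota$ at some later vertex of the nerve tree. More seriously, the assertion that every ray in $U(c',r',D')$ has already entered the deviating block needs two ingredients you do not supply: (i) the existence of a ray $c'$ from $x_0$ that genuinely penetrates a block far from the $\iota$-path (this is again the redirection problem, since one cannot simply extend segments into rays), and (ii) a quantitative statement that metric proximity at radius $r'$ controls position in the nerve --- this comes from the $\epsilon$-condition, which forces consecutively crossed walls to be uniformly separated, so that blocks at nerve-distance $k$ are at distance growing linearly in $k$ in $Z$; one then chooses the deviating block at nerve-distance large compared with $D'$. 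Without (i) and (ii) your open set is not known to be nonempty, nor known to avoid the $\iota$-points. In short: the architecture matches the proof the paper points to, part two is sound, but the two steps you describe as ``delicate bookkeeping'' are exactly the technical lemmas of \cite{CK00}, and they are missing rather than delicate.
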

	
	We say that a disk $D^{n+1}\subset \cup_{B}\partial B$ is a {\em simplex} if its of the form $I\ast S^n$ for an edge $I$ defined in \cite[Section 8]{CK00}. As in \cite{CK00} we  see that simplices are contained in the boundary of a single block. Moreover, the boundary of a simplex is $S^0\ast S^n$ and every point not in $S^n$ is a vertex and no interior point of $D^{n+1}$ is a vertex. We can see that all the boundary points not in $S^n$ are either the poles of a single block or $D^{n+1}$ is contained in $\partial W$ for some wall $W$. We call an arc $I\subset \cup_{B}\partial B$ a {\em special edge} if every point is a vertex. Let $x$, $y$ be vertices not on a special edge. Then they are in the same block if they are on the boundary of more than one simplex. They are in adjacent blocks if they are on the boundary of a unique simplex. Either both poles are on a special edge. A subset of $\cup_{B}\partial B$ is a block boundary if it is the union of all simplices intersecting it.
	
	A {\em hemisphere} is a subset of $\partial B$ of the form $S^n\ast I$ where $I$ is a longitude from \cite{CK00}. Let $B$ be a block and $\mathcal{P}$ be the set of poles in adjacent blocks. Let $H$ be a hemisphere in $B$. Then by \cite[Lemma 9]{CK00}, we have that $H\smallsetminus\bar{\mathcal{P}}$ has 3 components if $\alpha \neq \frac{\pi}{2}$ and 2 components if $\alpha = \frac{\pi}{2}$. 
	
	Finally, following \cite[Section 10]{CK00} we can distinguish these boundaries as any homeomorphism will take safe path components to safe path components, block boundaries to block boundaries, poles to poles and hemispheres to hemispheres. Then the previous paragraph gives a contradiction, concluding the proof of Theorem \ref{thm: nonhomeo}.
	
	At this time there is no known example of a group one of whose boundaries is path connected and the other not. Moreover, there is no known example of a CAT(0) group admitting two non homeomorphic boundaries one of which is locally connected. Our examples do nothing to answer these questions.

	\bibliographystyle{alpha}
	\bibliography{bibliography}
\end{document}